\documentclass[a4paper,leqno,10pt]{article}

\raggedbottom
\hfuzz3pt
\usepackage{epsf,graphicx,epsfig}
\usepackage{amscd}
\usepackage{amsmath,latexsym,amssymb,amsthm}
\usepackage[nospace,noadjust]{cite}
\usepackage{textcomp}
\usepackage{setspace,cite}
\usepackage{lscape,fancyhdr,fancybox}
\usepackage{stmaryrd}
\usepackage[all,cmtip]{xy}
\usepackage{tikz}
\usepackage{cancel}
\usetikzlibrary{shapes,arrows,decorations.markings}
\setlength{\unitlength}{0.4in}

\usepackage{graphicx}

\usepackage{color}
\usepackage{url}
\usepackage{enumerate}
\usepackage[mathscr]{euscript}

\setlength{\topmargin}{-9mm}
\setlength{\textheight}{10in}
\setlength{\oddsidemargin}{-0.1in}
\setlength{\evensidemargin}{-0.1in}
\setlength{\textwidth}{6.5in}
\setlength{\textwidth}{6.5in}

\setcounter{tocdepth}{1}

  \theoremstyle{plain}

\swapnumbers
    \newtheorem{thm}{Theorem}[section]
    \newtheorem{prop}[thm]{Proposition}

    \newtheorem{subsec}[thm]{}
\theoremstyle{definition}
    \newtheorem{defn}[thm]{Definition}
        \newtheorem{remark}[thm]{Remark}
    \newtheorem{exam}[thm]{Example}

\theoremstyle{remark}

\title{}
\author{}
\date{}
\usepackage{amssymb}

\usepackage{hyperref}
\hypersetup{
	colorlinks,
	citecolor=blue,
	filecolor=black,
	linkcolor=blue,
	urlcolor=black
}

\begin{document}

\title{Cohomology and deformations of compatible Hom-Lie algebras}

\author{Apurba Das \footnote{Department of Mathematics,
Indian Institute of Technology, Kharagpur 721302, West Bengal, India.} \footnote{Email: apurbadas348@gmail.com}}



\maketitle
\begin{abstract}
In this paper, we consider compatible Hom-Lie algebras as a twisted version of compatible Lie algebras. Compatible Hom-Lie algebras are characterized as Maurer-Cartan elements in a suitable bidifferential graded Lie algebra. We also define a cohomology theory for compatible Hom-Lie algebras generalizing the recent work of Liu, Sheng and Bai. As applications of cohomology, we study abelian extensions and deformations of compatible Hom-Lie algebras.
\end{abstract}

\medskip

\medskip

\medskip

\medskip


{\bf 2020 Mathematics Subject Classification:} 17B61, 17A30, 17B56, 16S80.

{\bf Keywords:} Hom-Lie algebras, Compatible structures, Cohomology, Extensions, Deformations.


\medskip

\medskip

\tableofcontents

\noindent
\thispagestyle{empty}


\section{Introduction}
The notion of Hom-algebras first appeared in the $q$-deformations of the Witt and Virasoro algebra by the work of Hartwig, Larsson and Silvestrov \cite{hls}. More precisely, they introduced Hom-Lie algebras as a twisted version of Lie algebras, where the usual Jacobi identity is twisted by a linear homomorphism (see Definition \ref{defn-hom-lie}). Later, various others algebras (e.g. associative, Leibniz, Poisson, \ldots) twisted by homomorphisms are also studied \cite{makh-sil,cheng-su,yau2}. These Hom-algebras are widely explored in the last 15 years. In \cite{amm-ej-makh,makh-sil2} the authors study cohomology and deformations of Hom-associative and Hom-Lie algebras. In particular, they generalize the classical Gerstenhaber bracket and Nijenhuis-Richardson bracket on the cochain complex of Hom-associative and Hom-Lie algebras. See also \cite{sheng-alg,yau2,yau3} and references therein for more on Hom-algebras.

\medskip

Two algebraic structures of the same kind are said to be compatible if their sum also defines a same type of algebraic structure. They appeared in many contexts of mathematics and mathematical physics. For instance, the notion of compatibility of two Poisson structures on a manifold was first appeared in the mathematical study of biHamiltonian mechanics \cite{kos,mag-mor}. Using the correspondence between Lie algebra structures on a vector space $\mathfrak{g}$ and linear Poisson structures on $\mathfrak{g}^\ast$, one lead to a notion of compatible Lie algebras \cite{kos}. See \cite{golu,pana} for more study on compatible Lie algebras. Some other compatible structures include compatible associative algebras \cite{odes}, compatible Lie bialgebras \cite{wu}, compatible Lie algebroids and Lie bialgebroids \cite{das-rep}. See also \cite{dotsenko,stro} for the operadic study of compatible algebraic structures.

\medskip

Recently, a cohomology theory for compatible Lie algebras has been introduced by Liu, Sheng and Bai \cite{sheng-comp}. This cohomology is based on the characterization of a compatible Lie algebra as Maurer-Cartan element in a bidifferential graded Lie algebra. It is also seen that this cohomology is related to extensions and deformations of compatible Lie algebras. These results have been extended to compatible associative algebras in \cite{chi-das-sami}. The present paper aims to define and study compatible Hom-Lie algebras. We observe that compatible Hom-Lie algebras are related to compatible Hom-Poisson manifolds. We first define representations and cohomology of a compatible Hom-Lie algebra. We relate our cohomology of a compatible Hom-Lie algebra with the cohomology of Hom-Lie algebra. As applications of our cohomology, we study abelian extensions and linear, finite order deformations of a compatible Hom-Lie algebra.

\medskip

The paper is organized as follows. In Section \ref{sec-2} (preliminary section), we recall Hom-Lie algebras and some basics on bidifferential graded Lie algebras. In Section \ref{sec-3}, we introduce compatible Hom-Lie algebras and their Maurer-Cartan characterizations in a suitably constructed bidifferential graded Lie algebra. We also define the notion of representation of a compatible Hom-Lie algebra and construct the semidirect product. The cohomology of a compatible Hom-Lie algebra with coefficients in a representation is given in Section \ref{sec-4}. We also introduce abelian extensions of a compatible Hom-Lie algebra and characterize equivalence classes of abelian extensions by the second cohomology group. In Section \ref{sec-5}, we first define linear deformations of a compatible Hom-Lie algebra and introduce Nijenhuis operators that induce trivial linear deformations. We show that the equivalence classes of infinitesimal deformations of a compatible Hom-Lie algebra are in one-to-one correspondence with the second cohomology group with coefficients in itself. Finally, we consider finite order deformations of compatible Hom-Lie algebra and study their extensions.

\medskip

All vector spaces, (multi)linear maps, tensor products are over a field $\mathbb{K}$ of characteristic zero.

\section{Hom-Lie algebras and bidifferential graded Lie algebras}\label{sec-2}
In this preliminary section, we recall some basics on Hom-Lie algebras  \cite{makh-sil,amm-ej-makh,sheng-alg} and bidifferential graded Lie algebras \cite{sheng-comp}.


\begin{defn}\label{defn-hom-lie}
A Hom-Lie algebra is a vector space $\mathfrak{g}$ together with a bilinear skew-symmetric bracket $[~,~] : \mathfrak{g} \otimes \mathfrak{g} \rightarrow \mathfrak{g}$ and a linear map $\alpha : \mathfrak{g} \rightarrow \mathfrak{g}$ satisfying the followings:
\begin{align*}
&\alpha [x, y] = [\alpha (x), \alpha (y) ] \qquad (\text{multiplicativity}), \\
&[[x, y], \alpha (z)] + [[y, z], \alpha (x)] + [[z, x], \alpha (y)] = 0 \qquad (\text{Hom-Jacobi identity}),
\end{align*}
for all $x,y, z \in \mathfrak{g}$.
\end{defn}

A Hom-Lie algebra as above may be denoted by the triple $(\mathfrak{g}, [~,~], \alpha)$ or simply by $\mathfrak{g}$ when no confusion arises. The bracket $[~,~]$ is said to be the Hom-Lie bracket on $\mathfrak{g}$ when the twisting map $\alpha$ is clear from the context.

It follows from the above definition that Hom-Lie algebras are a twisted version of Lie algebras. More specifically, a Hom-Lie algebra $(\mathfrak{g}, [~,~], \alpha)$ with $\alpha = \mathrm{id}$ is nothing but a Lie algebra. 

\begin{exam}
Let $(\mathfrak{g}, [~,~])$ be a Lie algebra and $\alpha : \mathfrak{g} \rightarrow \mathfrak{g}$ be a Lie algebra homomorphism. Then the triple $(\mathfrak{g}, [~,~]_\alpha = \alpha \circ [~,~], \alpha)$ is a Hom-Lie algebra, called induced by composition.
\end{exam}

\begin{exam}
Let $(A, \mu, \alpha)$ be a Hom-associative algebra, i.e., $A$ is a vector space, $\mu : A \otimes A \rightarrow A,~ (a, b) \mapsto a \cdot b$ is a bilinear map and $\alpha : A \rightarrow A$ a linear map satisfying $\alpha (a \cdot b) = \alpha (a) \cdot \alpha (b)$ and the following Hom-associativity
$(a \cdot b) \cdot \alpha (c) = \alpha(a) \cdot (b \cdot c),$ for $a, b, c \in A.$
If $(A, \mu, \alpha)$ is a Hom-associative algebra, then $(A, [~,~], \alpha)$ is a Hom-Lie algebra, where $[a, b] = a \cdot b - b \cdot a$, for $a, b \in A$.
\end{exam}

\begin{exam}
Let $(\mathfrak{g}, [~,~], \alpha)$ be a Hom-Lie algebra. Then for any $n \geq 0$, the triple $(\mathfrak{g}, [~,~]^{(n)} = \alpha^n \circ [~,~], \alpha^{n+1})$ is a Hom-Lie algebra, called the $n$-th derived Hom-Lie algebra.
\end{exam}

\begin{defn}
Let $(\mathfrak{g}, [~,~], \alpha)$ and $(\mathfrak{g}', [~,~]', \alpha')$ be two Hom-Lie algebras. A linear map $\phi : \mathfrak{g} \rightarrow \mathfrak{g}'$ is said to be a Hom-Lie algebra morphism if $\alpha' \circ \phi = \phi \circ \alpha$ and
$\phi [x,y] = [\phi (x), \phi (y)]',$ for $x, y \in \mathfrak{g}.$
\end{defn}

\medskip

Next we recall the graded Lie bracket (called the Nijenhuis-Richardson bracket) whose Maurer-Cartan elements are given by Hom-Lie algebra structures \cite{amm-ej-makh}. This generalizes the classical Nijenhuis-Richardson bracket in the context of Lie algebras \cite{nij-ric}. Let $\mathfrak{g}$ be a vector space and $\alpha : \mathfrak{g} \rightarrow \mathfrak{g}$ be a linear map. For each $n \geq 0$, consider the spaces $C^n_\mathrm{Hom} (\mathfrak{g} , \mathfrak{g})$ by
\begin{align*}
C^0_\mathrm{Hom} (\mathfrak{g}, \mathfrak{g}) = \{ x \in \mathfrak{g} |~ \alpha (x) = x \} ~~ \text{ and } ~~ C^n_\mathrm{Hom} (\mathfrak{g}, \mathfrak{g}) = \{ f : \wedge^n \mathfrak{g} \rightarrow \mathfrak{g} |~ \alpha \circ f = f \circ \alpha^{\wedge n} \}, ~ n \geq 1.
\end{align*} 
Then the shifted graded vector space $C^{\ast +1}_\mathrm{Hom} (\mathfrak{g}, \mathfrak{g}) = \oplus_{n \geq 0} C^{n+1}_\mathrm{Hom} (\mathfrak{g}, \mathfrak{g})$ carries a graded Lie bracket defined as follows. For $P \in C^{m+1}_\mathrm{Hom} (\mathfrak{g}, \mathfrak{g})$ and $Q \in C^{n+1}_\mathrm{Hom} (\mathfrak{g}, \mathfrak{g})$, the Nijenhuis-Richardson bracket $[P,Q]_\mathsf{NR} \in C^{m+n+1}_\mathrm{Hom} (\mathfrak{g}, \mathfrak{g})$ given by
\begin{align*}
&[P,Q]_\mathsf{NR} = P \diamond Q - (-1)^{mn}~ Q \diamond P, ~\text{ where }\\
&(P \diamond Q) (x_1, \ldots, x_{m+n+1}) = \sum_{\sigma \in Sh (n+1, m)} (-1)^\sigma ~ P \big(   Q ( x_{\sigma (1)}, \ldots, x_{\sigma (n+1)} ), \alpha^n (x_{\sigma (n+2)}), \ldots, \alpha^n (x_{\sigma (m+n+1)} \big).
\end{align*}

With this notation, we have the following.

\begin{prop}
Let $\mathfrak{g}$ be a vector space and $\alpha : \mathfrak{g} \rightarrow \mathfrak{g}$ be a linear map. Then the Hom-Lie brackets on $\mathfrak{g}$ are precisely the Maurer-Cartan elements in the graded Lie algebra $(C^{\ast + 1}_\mathrm{Hom} (\mathfrak{g}, \mathfrak{g}), [~,~]_\mathsf{NR} ).$
\end{prop}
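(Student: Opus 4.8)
The plan is to verify the claim by directly unpacking both the Maurer-Cartan equation and the Hom-Jacobi identity and matching them term by term. First I would recall that an element $\mu \in C^2_\mathrm{Hom}(\mathfrak{g},\mathfrak{g})$ is by definition a skew-symmetric bilinear map $\mu : \wedge^2 \mathfrak{g} \to \mathfrak{g}$ satisfying the multiplicativity constraint $\alpha \circ \mu = \mu \circ \alpha^{\wedge 2}$, i.e.\ $\alpha(\mu(x,y)) = \mu(\alpha(x),\alpha(y))$ for all $x,y$. So the data of such a $\mu$ is exactly the data of a skew-symmetric bracket $[~,~] := \mu$ together with the multiplicativity axiom of Definition~\ref{defn-hom-lie}. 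It remains to show that the Maurer-Cartan equation $[\mu,\mu]_\mathsf{NR} = 0$ is equivalent, under this identification, to the Hom-Jacobi identity.

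Next I would compute $[\mu,\mu]_\mathsf{NR}$ explicitly. Since $\mu$ has degree $m = 1$ in the shifted grading, the bracket is $[\mu,\mu]_\mathsf{NR} = 2\,\mu \diamond \mu$, and $\mu \diamond \mu \in C^3_\mathrm{Hom}(\mathfrak{g},\mathfrak{g})$ is given by summing over $Sh(2,1)$-shuffles:
\begin{align*}
(\mu \diamond \mu)(x_1,x_2,x_3) = \sum_{\sigma \in Sh(2,1)} (-1)^\sigma\, \mu\big( \mu(x_{\sigma(1)},x_{\sigma(2)}), \alpha(x_{\sigma(3)}) \big).
\end{align*}
The relevant $(2,1)$-shuffles are $(1,2,3)$, $(1,3,2)$ and $(2,3,1)$ with signs $+,-,+$ respectively, so
\begin{align*}
(\mu \diamond \mu)(x_1,x_2,x_3) = \mu(\mu(x_1,x_2),\alpha(x_3)) - \mu(\mu(x_1,x_3),\alpha(x_2)) + \mu(\mu(x_2,x_3),\alpha(x_1)).
\end{align*}
Using skew-symmetry of $\mu$ to rewrite $-\mu(\mu(x_1,x_3),\alpha(x_2)) = \mu(\mu(x_3,x_1),\alpha(x_2))$, this becomes exactly the left-hand side of the Hom-Jacobi identity evaluated at $(x_1,x_2,x_3)$ (with $[~,~] = \mu$). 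Hence $[\mu,\mu]_\mathsf{NR} = 2\,\mu\diamond\mu$ vanishes if and only if the Hom-Jacobi identity holds.

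Combining the two steps: $\mu \in C^2_\mathrm{Hom}(\mathfrak{g},\mathfrak{g})$ with $[\mu,\mu]_\mathsf{NR} = 0$ is precisely a skew-symmetric bilinear bracket satisfying both multiplicativity (built into membership in $C^2_\mathrm{Hom}$) and the Hom-Jacobi identity, i.e.\ a Hom-Lie bracket on $\mathfrak{g}$. I expect the only mildly delicate point to be bookkeeping the shuffle signs correctly and confirming that the membership condition $\alpha \circ \mu = \mu \circ \alpha^{\wedge 2}$ is what makes $\mu \diamond \mu$ land back in $C^3_\mathrm{Hom}$; neither is a genuine obstacle, and the computation is entirely parallel to the classical Nijenhuis-Richardson case in \cite{nij-ric}, the only new feature being the insertion of $\alpha^n$ on the non-plugged arguments.
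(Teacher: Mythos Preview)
Your proof is correct. The paper itself does not prove this proposition: it is stated as a recalled fact from \cite{amm-ej-makh} with no argument given, so there is nothing to compare against beyond noting that your direct unwinding of $[\mu,\mu]_\mathsf{NR} = 2\,\mu\diamond\mu$ over the three $(2,1)$-shuffles is exactly the standard verification one would expect.
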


\medskip

In the following, we recall the Chevalley-Eilenberg cohomology of a Hom-Lie algebra $(\mathfrak{g}, [~,~], \alpha)$ with coefficients in a representation.

\begin{defn}
Let $(\mathfrak{g}, [~,~], \alpha)$ be a Hom-Lie algebra. A representation of it consists of a vector space $V$ together with a bilinear operation (called the action) $\bullet : \mathfrak{g} \otimes V \rightarrow V$, $(x, v) \mapsto x \bullet v$ and a linear map $\beta : V \rightarrow V$ satisfying
\begin{align*}
&\beta (x \bullet v) = \alpha (x) \bullet \beta (v), \\
& [x,y] \bullet \beta (v) = \alpha (x) \bullet (y \bullet v) - \alpha (y) \bullet (x \bullet v),~ \text{ for } x, y \in \mathfrak{g}, v \in V.
\end{align*}
\end{defn}

We denote a representation as above by $(V, \bullet, \beta)$ or simply by $V$. It follows that any Hom-Lie algebra $(\mathfrak{g}, [~,~], \alpha)$ is a representation of itself with the action given by the bracket $[~,~].$ This is called the adjoint representation.

Let $(\mathfrak{g}, [~,~], \alpha)$ be a Hom-Lie algebra and $(V, \bullet, \beta)$ be a representation. For each $n \geq 0$, we define the $n$-th cochain group $C^n_\mathrm{Hom}(\mathfrak{g}, V)$ as
\begin{align*}
C^0_\mathrm{Hom} (\mathfrak{g}, V) = \{ v \in V |~ \beta (v) = v \} ~~ \text{ and } ~~ C^n_\mathrm{Hom} (\mathfrak{g}, V) = \{ f : \wedge^n \mathfrak{g} \rightarrow V |~ \beta \circ f = f \circ \alpha^{\wedge n} \}, ~ n \geq 1.
\end{align*}
The coboundary operator $\delta_\mathrm{Hom} : C^n_\mathrm{Hom} (\mathfrak{g}, V) \rightarrow C^{n+1}_\mathrm{Hom} (\mathfrak{g}, V)$, for $n \geq 0$, given by
\begin{align}
(\delta_\mathrm{Hom} v )(x) =~& x \bullet v, ~ \text{ for } v \in C^0_\mathrm{Hom} (\mathfrak{g}, V), x \in \mathfrak{g},\\
(\delta_\mathrm{Hom} f ) (x_1, \ldots, x_{n+1}) =& \sum_{i=1}^{n+1} (-1)^{i+1}~ \alpha^{n-1} (x_i) \bullet f (x_1, \ldots, \widehat{x_i}, \ldots, x_{n+1}) \\
&+ \sum_{1 \leq i < j \leq n+1} (-1)^{i+j}~ f ([x_i, x_j], \alpha (x_1), \ldots, \widehat{\alpha (x_i)}, \ldots, \widehat{\alpha (x_j)}, \ldots, \alpha (x_{n+1}) ), \nonumber
\end{align}
for $f \in C^n_\mathrm{Hom} (\mathfrak{g}, V)$ and $x_1, \ldots, x_{n+1} \in \mathfrak{g}$. The cohomology groups of the cochain complex $\{ C^\ast_\mathrm{Hom}(\mathfrak{g}, V), \delta_\mathrm{Hom} \}$ are called the Chevalley-Eilenberg cohomology groups, denoted by $H^\ast_\mathrm{Hom} (\mathfrak{g}, V).$

It is important to note that the coboundary operator for the Chevalley-Eilenberg cohomology of the Hom-Lie algebra $(\mathfrak{g}, [~,~], \alpha)$ with coefficients in itself is simply given by
\begin{align*}
\delta_\mathrm{Hom} f = (-1)^{n-1} [\mu, f]_\mathsf{NR}, ~\text{for } f \in C^n_\mathrm{Hom}(\mathfrak{g}, \mathfrak{g}),
\end{align*}
where $\mu \in C^2_\mathrm{Hom}(\mathfrak{g}, \mathfrak{g})$ corresponds to the Hom-Lie bracket $[~,~]$.

\medskip

\medskip

\noindent {\bf Bidifferential graded Lie algebras.}
Next, we recall bidifferential graded Lie algebras \cite{sheng-comp}.
Before that, let us first give the definition of a differential graded Lie algebra.

\begin{defn}
A differential graded Lie algebra is a triple $(L= \oplus L^i, [~,~], d)$ consisting of a graded Lie algebra together with a degree $+1$ differential $d : L \rightarrow L$ which is a derivation for the bracket $[~,~]$.
\end{defn}

An element $\theta \in L^1$ is said to be a Maurer-Cartan element in the differential graded Lie algebra $(L, [~,~], d)$ if $\theta$ satisfies
\begin{align*}
d \theta + \frac{1}{2} [\theta, \theta] = 0.
\end{align*}

\begin{defn}
A bidifferential graded Lie algebra is a quadruple $(L = \oplus L^i, [~, ~], d_1, d_2)$ in which the triples $(L, [~,~], d_1)$ and $(L, [~,~], d_2)$ are differential graded Lie algebras additionally satisfying
$d_1 \circ d_2 + d_2 \circ d_1 = 0.$
\end{defn}

\begin{remark}\label{dgla-bdgla}
Any graded Lie algebra can be considered as a bidifferential graded Lie algebra with both the differentials $d_1$ and $d_2$ to be trivial.
\end{remark}


\begin{defn}
Let $(L, [~,~], d_1, d_2)$ be a bidifferential graded Lie algebra. A pair of elements $(\theta_1, \theta_2) \in L^1 \oplus L^1$ is said to be a Maurer-Cartan element if
\begin{itemize}
\item[(i)] $\theta_1$ is a Maurer-Cartan element in the differential graded Lie algebra $(L, [~,~], d_1)$;
\item[(ii)] $\theta_2$ is a Maurer-Cartan element in the differential graded Lie algebra $(L, [~,~], d_2)$;
\item[(iii)] the following compatibility condition holds
\begin{align*}
d_1 \theta_2 + d_2 \theta_1 + [\theta_1, \theta_2] = 0.
\end{align*} 
\end{itemize}
\end{defn}

Like a differential graded Lie algebra can be twisted by a Maurer-Cartan element, the same result holds for bidifferential graded Lie algebras.

\begin{prop}\label{mc-deform} Let $(L, [~,~], d_1, d_2)$ be a bidifferential graded Lie algebra and let $(\theta_1, \theta_2)$ be a Maurer-Cartan element. Then the quadruple $(L, [~,~], d_1^{\theta_1}, d_2^{\theta_2})$ is a bidifferential graded Lie algebra, where
\begin{align*}
d_1^{\theta_1} = d_1 + [\theta_1, - ] ~~~ \text{ and } ~~~ d_2^{\theta_2} = d_2 + [\theta_2, - ].
\end{align*}
For any $\vartheta_1 , \vartheta_2 \in L^1$, the pair $(\theta_1 + \vartheta_1, \theta_2 + \vartheta_2)$ is a Maurer-Cartan element in the bidifferential graded Lie algebra $(L, [~,~], d_1, d_2)$ if and only if $(\vartheta_1, \vartheta_2)$ is a Maurer-Cartan element in the bidifferential graded Lie algebra $(L, [~,~], d_1^{\theta_1}, d_2^{\theta_2}).$
\end{prop}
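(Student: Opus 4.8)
The plan is to verify the three defining conditions of a bidifferential graded Lie algebra for the quadruple $(L, [~,~], d_1^{\theta_1}, d_2^{\theta_2})$, and then check the Maurer-Cartan equivalence. First I would show that $d_1^{\theta_1} = d_1 + [\theta_1, -]$ is a degree $+1$ derivation with $(d_1^{\theta_1})^2 = 0$. The derivation property is immediate since $d_1$ is a derivation and $[\theta_1, -]$ is a derivation by the graded Jacobi identity (with $\theta_1$ of degree $1$). For the square, I would expand
\begin{align*}
(d_1^{\theta_1})^2 = d_1^2 + d_1 [\theta_1, -] + [\theta_1, d_1(-)] + [\theta_1, [\theta_1, -]],
\end{align*}
use $d_1^2 = 0$, the derivation property of $d_1$ to get $d_1[\theta_1, x] + [\theta_1, d_1 x] = [d_1\theta_1, x]$, and the graded Jacobi identity to get $[\theta_1, [\theta_1, x]] = \tfrac12[[\theta_1,\theta_1], x]$; hence $(d_1^{\theta_1})^2 = [d_1\theta_1 + \tfrac12[\theta_1,\theta_1], -] = 0$ precisely because $\theta_1$ is a Maurer-Cartan element for $d_1$. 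The same argument applies to $d_2^{\theta_2}$ using condition (ii).

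Next I would check the cross-condition $d_1^{\theta_1} \circ d_2^{\theta_2} + d_2^{\theta_2} \circ d_1^{\theta_1} = 0$. Expanding both composites and collecting terms, the pure pieces $d_1 d_2 + d_2 d_1$ vanish by hypothesis; the mixed derivation pieces combine, via the derivation properties of $d_1$ and $d_2$, into $[d_1\theta_2 + d_2\theta_1, -]$; and the bracket-bracket pieces $[\theta_1,[\theta_2,-]] + [\theta_2,[\theta_1,-]]$ collapse by the graded Jacobi identity to $[[\theta_1,\theta_2],-]$. Summing, we obtain $[\,d_1\theta_2 + d_2\theta_1 + [\theta_1,\theta_2],\,-\,] = 0$ by the compatibility condition (iii) in the definition of a Maurer-Cartan element. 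This establishes that $(L, [~,~], d_1^{\theta_1}, d_2^{\theta_2})$ is a bidifferential graded Lie algebra.

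For the final equivalence, I would substitute $(\theta_1 + \vartheta_1, \theta_2 + \vartheta_2)$ into each of the three Maurer-Cartan conditions for $(L,[~,~],d_1,d_2)$ and expand. For the first, $d_1(\theta_1 + \vartheta_1) + \tfrac12[\theta_1+\vartheta_1, \theta_1+\vartheta_1] = \big(d_1\theta_1 + \tfrac12[\theta_1,\theta_1]\big) + \big(d_1\vartheta_1 + [\theta_1,\vartheta_1] + \tfrac12[\vartheta_1,\vartheta_1]\big)$; the first parenthesis vanishes since $\theta_1$ is Maurer-Cartan for $d_1$, and the second is exactly $d_1^{\theta_1}\vartheta_1 + \tfrac12[\vartheta_1,\vartheta_1]$. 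The analogous computation handles $\theta_2$. For the compatibility condition, expanding $d_1(\theta_2+\vartheta_2) + d_2(\theta_1+\vartheta_1) + [\theta_1+\vartheta_1, \theta_2+\vartheta_2]$ and peeling off the vanishing term $d_1\theta_2 + d_2\theta_1 + [\theta_1,\theta_2]$ leaves $d_1\vartheta_2 + [\theta_1,\vartheta_2] + d_2\vartheta_1 + [\theta_2,\vartheta_1] + [\vartheta_1,\vartheta_2]$, which is precisely $d_1^{\theta_1}\vartheta_2 + d_2^{\theta_2}\vartheta_1 + [\vartheta_1,\vartheta_2]$. Thus all three conditions for $(\vartheta_1,\vartheta_2)$ relative to the twisted differentials hold if and only if they hold for $(\theta_1+\vartheta_1,\theta_2+\vartheta_2)$ relative to the original ones.

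The computations are entirely routine; the only point requiring care is bookkeeping of signs in the graded Jacobi identity and the derivation rules, since $\theta_1, \theta_2, \vartheta_1, \vartheta_2$ all sit in degree $1$ (so $(-1)^{|\theta_1||\theta_2|} = -1$) and the differentials have degree $+1$. I expect the sign tracking in the cross-condition $d_1^{\theta_1} d_2^{\theta_2} + d_2^{\theta_2} d_1^{\theta_1} = 0$ to be the most error-prone step, but no genuine obstacle arises — the statement is a direct analogue of the classical fact that a differential graded Lie algebra can be twisted by a Maurer-Cartan element, applied twice and compatibly.
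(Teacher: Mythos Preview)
Your proof is correct and complete. The paper, however, does not supply a proof of this proposition at all: it is stated without proof immediately before Section~\ref{sec-3}, treated as a standard result imported from \cite{sheng-comp} (see the sentence preceding the proposition: ``Like a differential graded Lie algebra can be twisted by a Maurer-Cartan element, the same result holds for bidifferential graded Lie algebras''). So there is nothing in the paper to compare your argument against; you have simply filled in the routine verification that the authors chose to omit.
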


\section{Compatible Hom-Lie algebras}\label{sec-3}

In this section, we introduce compatible Hom-Lie algebras and give a Maurer-Cartan characterization. We end this section by defining representations of compatible Hom-Lie algebras.

Let $\mathfrak{g}$ be a vector space and $\alpha : \mathfrak{g} \rightarrow \mathfrak{g}$ be a linear map.

\begin{defn}
Two Hom-Lie algebras $(\mathfrak{g}, [~,~]_1, \alpha)$ and $(\mathfrak{g}, [~,~]_2, \alpha)$ are said to be compatible if for all $\lambda, \eta \in \mathbb{K}$, the triple $(\mathfrak{g}, \lambda [~,~]_1 + \eta [~,~]_2, \alpha)$ is a Hom-Lie algebra.
\end{defn}

The condition in the above definition is equivalent to the following
\begin{align}\label{comp-cond-e}
[[x,y]_1, \alpha (z)]_2 + [[y,z]_1, \alpha(x)]_2 + [[z,x]_1, \alpha (y)]_2 + [[x,y]_2, \alpha (z)]_1 + [[y,z]_2, \alpha(x)]_1 + [[z,x]_2, \alpha (y)]_1 = 0,
\end{align}
for all $x,y, z \in \mathfrak{g}.$

\begin{defn}
A compatible Hom-Lie algebra is a quadruple $(\mathfrak{g}, [~,~]_1, [~,~]_2, \alpha)$ in which $(\mathfrak{g}, [~,~]_1, \alpha)$ and $(\mathfrak{g}, [~,~]_2, \alpha)$ are both Hom-Lie algebras and are compatible.
\end{defn}

In this case, we say that the pair $([~,~]_1, [~,~]_2)$ is a compatible Hom-Lie algebra structure on $\mathfrak{g}$ when the twisting map $\alpha$ is clear from the context.

Compatible Hom-Lie algebras are twisted version of compatible Lie algebras \cite{kos}. Recall that a compatible Lie algebra is a triple $(\mathfrak{g}, [~,~]_1, [~,~]_2)$ in which $(\mathfrak{g}, [~,~]_1)$ and $(\mathfrak{g}, [~,~]_2)$ are Lie algebras and are compatible in the sense that $\lambda [~,~]_1 + \eta [~,~]_2$ is a Lie bracket on $\mathfrak{g}$, for all $\lambda, \eta \in \mathbb{K}$. Thus, a compatible Hom-Lie algebra $(\mathfrak{g}, [~,~]_1, [~,~]_2, \alpha)$ with $\alpha = \mathrm{id}$ is nothing but a compatible Lie algebra.

\medskip

It is known that compatible Lie algebras are closely related with compatible Poisson structures \cite{kos}. Hence compatible Poisson structures can be thought of as a motivation to study compatible Lie algebras. In the following, we study compatible Hom-Poisson structures and relate them with compatible Hom-Lie algebras. Recall that a Hom-Poisson manifold is a triple $(M, \{~,~\}, \triangle)$ consisting of a smooth manifold $M$, a bilinear skew-symmetric operation $\{ ~,~ \} : C^\infty (M) \times C^\infty (M) \rightarrow C^\infty (M)$ and a linear map $\triangle : C^\infty (M) \rightarrow C^\infty (M)$ satisfying for $f, g, h \in C^\infty (M)$,
\begin{align*}
&\triangle \{ f, g \} = \{ \triangle f, \triangle g \}, \qquad \triangle (fg) = (\triangle f) (\triangle g),\\
&\{ f, gh \} = (\triangle g) \{ f, h \} + \{ f, g \} (\triangle h),\\
&\{ \{ f, g \}, \triangle h \} + \{ \{ g, h \}, \triangle f \} + \{ \{ h, f \}, \triangle g \} = 0.
\end{align*}
See \cite{cai} for more details. If $(M, \{~,~\})$ is a Poisson manifold and $\varphi : M \rightarrow M$ is a Poisson morphism then $(M, \{~,~\}_\varphi = \varphi^* \circ \{~,~\}, \triangle = \varphi^*)$ is a Hom-Poisson manifold.

Let $(M, \{~,~\}_1, \triangle)$ and $(M, \{~,~\}_1, \triangle)$ be two Hom-Poisson manifolds with same underlying manifold $M$ and same twisting map $\triangle$. These two Hom-Poisson manifolds are said to be compatible if for any $\lambda, \eta \in \mathbb{K}$, the triple $(M, \lambda \{ ~, ~\}_1 + \eta \{~, ~\}_2, \triangle)$ is also a Hom-Poisson manifold. In this case, we say that $(M, \{~,~\}_1, \{~,~\}_2, \triangle)$ is a compatible Hom-Poisson manifold. A compatible Hom-Poisson manifold $(M, \{~,~\}_1, \{~,~\}_2, \triangle)$ is said to be `linear' if $M$ is a vector space and $\{ ~, ~\}_1, \{~,~\}_2, \triangle$ takes linear maps to linear maps.

Let $(\mathfrak{g}, [~,~]_1, [~,~]_2, \alpha)$ be a compatible Hom-Lie algebra. For any $f \in C^\infty (\mathfrak{g}^*)$ and $\xi \in \mathfrak{g}^*$, we consider the tangent map of $f$ at the point $\alpha^* \xi$,
\begin{align*}
\mathfrak{g}^* ~ \cong ~ T_{\alpha^* \xi} \mathfrak{g}^* \xrightarrow{T_{\alpha^* \xi} f} T_{f (\alpha^* \xi)} \mathbb{K} ~ \cong ~\mathbb{K}.
\end{align*}
Since $T_{\alpha^* \xi} f$ is a linear map, it corresponds to an element of $\mathfrak{g}$. Using this notation, we will now define brackets $\{ ~, ~ \}_1, \{~, ~\}_1 : C^\infty (\mathfrak{g}^*) \times C^\infty (\mathfrak{g}^*) \rightarrow C^\infty (\mathfrak{g}^*)$ as follows:
\begin{align*}
\{ f, g \}_1 (\xi) := \langle [T_{\alpha^* \xi} f, T_{\alpha^* \xi} g]_1, \xi \rangle ~~~~ \text{ and } ~~~~ \{ f, g \}_2 (\xi) := \langle [T_{\alpha^* \xi} f, T_{\alpha^* \xi} g]_2, \xi \rangle, ~ \text{ for } f , g \in C^\infty (\mathfrak{g}^*).
\end{align*}
By using the fact that $(\mathfrak{g}, [~,~]_1, \alpha)$ is a Hom-Lie algebra, it is easy to verify that the triple $(\mathfrak{g}^*, \{~, ~\}_1, \triangle)$ is a Hom-Poisson manifold, where $\triangle : C^\infty (\mathfrak{g}^*) \rightarrow  C^\infty (\mathfrak{g}^*)$ is the map $\triangle (f) = f \circ \alpha^*$. Similarly, $(\mathfrak{g}, [~,~]_2, \alpha)$ is a Hom-Lie algebra implies that $(\mathfrak{g}^*, \{~, ~\}_2, \triangle)$ is a Hom-Poisson manifold. Finally, the compatibility condition of the compatible Hom-Lie algebra $(\mathfrak{g}, [~,~]_1, [~,~]_2, \alpha)$ ensures that $(\mathfrak{g}^*, \{~,~\}_1, \{~,~\}_2, \triangle)$ is a compatible Hom-Poisson manifold. Finally, if $f,g$ are two linear functions on $\mathfrak{g}^*$, then $\{ f, g \}_1, \{f, g \}_2$ and $\triangle (f)$ are all linear functions on $\mathfrak{g}^*$. Hence it follows that $(\mathfrak{g}^*, \{~,~\}_1, \{~, ~\}_2, \triangle)$ is a linear compatible Hom-Poisson manifold.

\begin{defn}
Let $(\mathfrak{g}, [~,~]_1, [~,~]_2, \alpha)$ and $(\mathfrak{g}', [~,~]'_1, [~,~]'_2, \alpha')$ be two compatible Hom-Lie algebras. A morphism between them is a linear map $\phi : \mathfrak{g} \rightarrow \mathfrak{g}'$ which is a Hom-Lie algebra morphism from $(\mathfrak{g}, [~,~]_1, \alpha)$ to $(\mathfrak{g}', [~,~]'_1, \alpha')$, and a Hom-Lie algebra morphism from $(\mathfrak{g}, [~,~]_2, \alpha)$ to $(\mathfrak{g}', [~,~]'_2, \alpha').$
\end{defn}

In the following, we give some examples of compatible Hom-Lie algebras.

\begin{exam}
Let $(\mathfrak{g}, [~,~]_1, [~,~]_2)$ be a compatible Lie algebra and $\alpha : \mathfrak{g} \rightarrow \mathfrak{g}$ be a compatible Hom-Lie algebra homomorphism, i.e., $\alpha$ is a Lie algebra homomorphism for both the Lie algebras $(\mathfrak{g}, [~,~]_1)$ and $(\mathfrak{g}, [~,~]_2)$. Then the quadruple $(\mathfrak{g}, \alpha \circ [~,~]_1, \alpha \circ [~,~]_2, \alpha)$ is a compatible Hom-Lie algebra.
\end{exam}

\begin{exam}
Let $(\mathfrak{g}, [~,~]_1, [~,~]_2, \alpha)$ be a compatible Hom-Lie algebra. Then for each $n \geq 0$, the quadruple $(\mathfrak{g}, [~,~]_1^{(n)} = \alpha^n \circ [~,~]_1, [~,~]_2^{(n)} = \alpha^n \circ [~,~]_2, \alpha^{n+1})$ is a compatible Hom-Lie algebra. This is the $n$-th derived compatible Hom-Lie algebra.
\end{exam}

Let $(\mathfrak{g}, [~,~], \alpha)$ be a Hom-Lie algebra. A Nijenhuis operator on this Hom-Lie algebra is a linear map $N: \mathfrak{g} \rightarrow \mathfrak{g}$ satisfying $\alpha \circ N = N \circ \alpha$ and
\begin{align*}
[Nx, Ny]= N ( [Nx, y] + [x, Ny] - N[x,y]),~ \text{ for } x, y \in \mathfrak{g}.
\end{align*}
Nijenhuis operators are useful to study linear deformations of a Hom-Lie algebra \cite{das-sen}. Consider the $4$-dimensional Hom-Lie algebra $(\mathfrak{g}, [~,~], \alpha)$, where $\mathfrak{g} = \langle e_1, e_2, e_3, e_4 \rangle$ and structure maps are given by
\begin{align*}
&[e_1, e_2 ] = a e_1 + a e_2, \\
&\alpha (e_1) = e_2, ~~~~ \alpha (e_2) = e_1, ~~~~ \alpha (e_3) = 0 ~~~ \text{ and } ~~~ \alpha (e_4 ) = e_3.
\end{align*}
Then it is easy to verify that the map $N : \mathfrak{g} \rightarrow \mathfrak{g}$ defined by
\begin{align*}
N (e_1) = e_2, ~~~~ N(e_2) = e_1, ~~~~ N (e_3) = e_3, ~~~~ N (e_4) = e_4
\end{align*}
is a Nijenhuis operator on the Hom-Lie algebra $(\mathfrak{g}, [~,~], \alpha)$.

\begin{exam}
Let $N$ be a Nijenhuis operator on a Hom-Lie algebra $(\mathfrak{g}, [~,~], \alpha)$. Then there is a deformed Hom-Lie bracket on $\mathfrak{g}$ given by
\begin{align*}
[x,y]_N := [Nx, y]+ [x, Ny] - N[x,y],~\text{ for } x, y \in \mathfrak{g}.
\end{align*}
In other words $(\mathfrak{g}, [~,~]_N, \alpha)$ is a Hom-Lie algebra. It is easy to see that the quadruple $(\mathfrak{g}, [~,~], [~,~]_N, \alpha)$ is a compatible Hom-Lie algebra.
\end{exam}

\begin{exam}
Let $(\mathfrak{g}, [~,~], \alpha)$ be a Hom-Lie algebra and $(V, \bullet, \beta)$ be a representation of it. Suppose $f \in C^2_\mathrm{Hom} (\mathfrak{g},V)$ is a $2$-cocycle in the Chevalley-Eilenberg cohomology complex of the Hom-Lie algebra $\mathfrak{g}$ with coefficients in $V$. Then the direct sum vector space $\mathfrak{g} \oplus V$ inherits a Hom-Lie algebra structure (called the $f$-twisted semidirect product) whose bracket is given by
\begin{align*}
[(x,u), (y, v)]_{\ltimes_f} := ([x,y], x \bullet v - y \bullet u + f (x,y)), ~ \text{ for } (x, u), (y, v) \in \mathfrak{g} \oplus V
\end{align*}
and the linear twisting map on $\mathfrak{g} \oplus V$ is given by $\alpha \oplus \beta$. It is easy to verify that the quadruple $(\mathfrak{g} \oplus V, [~,~]_{\ltimes_0}, [~,~]_{\ltimes_f}, \alpha \oplus \beta)$ is a compatible Hom-Lie algebra.
\end{exam}

Another example of a compatible Hom-Lie algebra arises from compatible Rota-Baxter operators on a Hom-Lie algebra. Rota-Baxter operators are an algebraic abstraction of the integral operator and operator analogue of Poisson structures \cite{guo-book}.

\begin{defn}
Let $(\mathfrak{g}, [~,~], \alpha)$ be a Hom-Lie algebra. A linear map $R : \mathfrak{g} \rightarrow \mathfrak{g}$ is said to be a Rota-Baxter operator of weight $\lambda \in \mathbb{K}$ on the Hom-Lie algebra $(\mathfrak{g}, [~,~], \alpha)$ if $R$ satisfies $\alpha \circ R = R \circ \alpha$ and
\begin{align*}
 [Rx, Ry] = R ([Rx,y]+ [x, Ry] + \lambda [x,y]),~ \text{ for } x, y \in \mathfrak{g}.
\end{align*}
\end{defn}

A Rota-Baxter operator $R$ induces a new Hom-Lie algebra structure on $\mathfrak{g}$ with the Hom-Lie bracket
\begin{align*}
[x,y]_R := [Rx,y]+ [x, Ry] + \lambda [x,y],~ \text{ for } x, y \in \mathfrak{g}.
\end{align*}
Compatible Poisson structures first appeared in the context of bihamiltonian mechanics \cite{kos,mag-mor}. The operator version of compatible Poisson structures in the Hom-Lie algebra context is given by the following.

\begin{defn}
Two Rota-Baxter operators $R$ and $S$ of same weight $\lambda \in \mathbb{K}$ on a Hom-Lie algebra $(\mathfrak{g}, [~,~], \alpha)$ are said to be compatible if
\begin{align*}
[Rx, Sy] + [Sx, Ry] = R([Sx, y]+[x, Sy]) + S ([Rx,y]+ [x, Ry]),~ \text{ for } x, y \in \mathfrak{g}.
\end{align*}
\end{defn}

\begin{exam}
Let $(\mathfrak{g}, [~,~], \alpha)$ be the Hom-Lie algebra given by $\mathfrak{g} = \langle e_1, e_2 \rangle$ and the structure maps are given by
\begin{align*}
[e_1, e_2 ] = a e_1 + a e_2, ~~~  \alpha (e_1 ) = e_2, ~~~ \alpha (e_2 ) = e_1.
\end{align*}
Then the map $R=\alpha$ is a Rota-Baxter operator of weight $\lambda = -1$.

Let $(\mathfrak{g}, [~,~], \alpha)$ be a Hom-Lie algebra and let $R: \mathfrak{g} \rightarrow \mathfrak{g}$ be a Rota-Baxter operator of weight $\lambda$. Then $- \lambda \mathrm{id} - R : \mathfrak{g} \rightarrow \mathfrak{g}$ is also a Rota-Baxter operator of the same weight. Moreover, $R$ and $- \lambda \mathrm{id} - R$ are compatible.
\end{exam}

The proof of the following proposition is straightforward.

\begin{prop}
Let $R$ and $S$ be two compatible Rota-Baxter operators of weight $\lambda \in \mathbb{K}$ on a Hom-Lie algebra $(\mathfrak{g},[~,~], \alpha).$ Then $(\mathfrak{g}, [~,~]_R, [~,~]_S, \alpha)$ is a compatible Hom-Lie algebra.
\end{prop}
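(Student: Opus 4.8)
The plan is to verify directly that each of the two brackets $[~,~]_R$ and $[~,~]_S$ is a Hom-Lie bracket, and then check the compatibility condition \eqref{comp-cond-e}. For the first part I would invoke the fact, already recorded in the excerpt, that a Rota-Baxter operator $R$ of weight $\lambda$ on $(\mathfrak{g},[~,~],\alpha)$ induces the Hom-Lie algebra $(\mathfrak{g},[~,~]_R,\alpha)$; the same applies to $S$. So $(\mathfrak{g},[~,~]_R,\alpha)$ and $(\mathfrak{g},[~,~]_S,\alpha)$ are each Hom-Lie algebras, and it only remains to show they are compatible, i.e. that $[~,~]_R+[~,~]_S$ satisfies the Hom-Jacobi identity, equivalently that \eqref{comp-cond-e} holds with $[~,~]_1=[~,~]_R$ and $[~,~]_2=[~,~]_S$. (Multiplicativity $\alpha\circ([~,~]_R+[~,~]_S)=[\alpha(-),\alpha(-)]_R+[\alpha(-),\alpha(-)]_S$ is immediate from $\alpha\circ R=R\circ\alpha$, $\alpha\circ S=S\circ\alpha$ and multiplicativity of the original bracket.)

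The cleanest route to compatibility is to observe that for $\mu,\nu\in\mathbb{K}$ the linear map $\mu R+\nu S$ is itself a Rota-Baxter operator of weight $(\mu+\nu)\lambda$ on $(\mathfrak{g},[~,~],\alpha)$. Indeed, expanding $[(\mu R+\nu S)x,(\mu R+\nu S)y]$ by bilinearity gives $\mu^2[Rx,Ry]+\nu^2[Sx,Sy]+\mu\nu\big([Rx,Sy]+[Sx,Ry]\big)$; applying the Rota-Baxter identity to the first two terms and the compatibility identity for $R,S$ to the third, and collecting, one obtains exactly $(\mu R+\nu S)\big([(\mu R+\nu S)x,y]+[x,(\mu R+\nu S)y]+(\mu+\nu)\lambda[x,y]\big)$. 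Hence the deformed bracket $[~,~]_{\mu R+\nu S}$ is a Hom-Lie bracket on $\mathfrak{g}$ for all $\mu,\nu$. Specialising, $[~,~]_{R+S}=[~,~]_R+[~,~]_S-\lambda[~,~]$ is a Hom-Lie bracket, and since $[~,~]_R+[~,~]_S = [~,~]_{R+S}+\lambda[~,~]$ is a sum of two Hom-Lie brackets whose individual Jacobiators are known, a short bilinearity argument (or equivalently evaluating the $\mu=\nu=1$ versus $\mu=\nu=0$ cases and comparing the quadratic-in-parameters terms) isolates the bilinear cross term and yields precisely \eqref{comp-cond-e}.

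Alternatively, and perhaps more transparently, one can argue at the level of the Nijenhuis--Richardson bracket: writing $\mu\in C^2_\mathrm{Hom}(\mathfrak{g},\mathfrak{g})$ for the original bracket and $\mu_R,\mu_S$ for the deformed ones, the claim that $\mu_R+\mu_S$ is Hom-Lie is the equation $[\mu_R+\mu_S,\mu_R+\mu_S]_\mathsf{NR}=0$, which expands as $[\mu_R,\mu_R]_\mathsf{NR}+[\mu_S,\mu_S]_\mathsf{NR}+2[\mu_R,\mu_S]_\mathsf{NR}=0$; the first two brackets vanish because $\mu_R,\mu_S$ are Maurer--Cartan elements, so compatibility reduces to $[\mu_R,\mu_S]_\mathsf{NR}=0$, and this is exactly the polarisation of the statement that $\mu_{R+S}-\lambda\mu$ is Maurer--Cartan. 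I expect the only mildly delicate point to be the bookkeeping in this polarisation/cross-term extraction — keeping track of which terms are quadratic versus bilinear in the two operators and confirming that the weight-$\lambda$ corrections cancel correctly — but this is a routine (if slightly tedious) computation with no conceptual obstacle, which is why the proposition is labelled straightforward.
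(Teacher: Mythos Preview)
The paper gives no proof, declaring the proposition straightforward. Your strategy --- show that $\mu R+\nu S$ is a Rota--Baxter operator of weight $(\mu+\nu)\lambda$ and hence induces a Hom-Lie bracket --- is correct and is presumably what the author has in mind. However, you have introduced a small computational slip that obscures how clean the argument actually is. With the correct weight $(\mu+\nu)\lambda$ for $\mu R+\nu S$, the induced bracket is
\[
[x,y]_{\mu R+\nu S}=[(\mu R+\nu S)x,y]+[x,(\mu R+\nu S)y]+(\mu+\nu)\lambda[x,y]=\mu[x,y]_R+\nu[x,y]_S,
\]
so $[~,~]_{\mu R+\nu S}=\mu[~,~]_R+\nu[~,~]_S$ on the nose, with no $-\lambda[~,~]$ correction term. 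Since you have already verified that $\mu R+\nu S$ is Rota--Baxter, this immediately says $\mu[~,~]_R+\nu[~,~]_S$ is a Hom-Lie bracket for every $\mu,\nu\in\mathbb{K}$, which is precisely the definition of compatibility. The polarisation and cross-term bookkeeping you sketch at the end --- separating $[~,~]_{R+S}$ from $\lambda[~,~]$ and extracting the bilinear part at the Nijenhuis--Richardson level --- is therefore unnecessary, and the ``mildly delicate point'' you anticipate does not arise.
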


\medskip

Let $\mathfrak{g}$ be a vector space and $\alpha : \mathfrak{g} \rightarrow \mathfrak{g}$ be a linear map. Consider the graded Lie algebra $(C^{\ast +1 }_\mathrm{Hom} (\mathfrak{g}, \mathfrak{g}), [~,~]_\mathsf{NR} )$ given in Section \ref{sec-2}. Hence by Remark \ref{dgla-bdgla}, the quadruple 
\begin{align*}
(C^{\ast +1 }_\mathrm{Hom} (\mathfrak{g}, \mathfrak{g}), [~,~]_\mathsf{NR} , d_1 = 0, d_2 = 0)
\end{align*}
is a bidifferential graded Lie algebra. Then we have the following Maurer-Cartan characterization of compatible Hom-Lie algebras.

\begin{thm}
There is a one-to-one correspondence between compatible Hom-Lie algebra structures on $\mathfrak{g}$ and Maurer-Cartan elements in the bidifferential graded Lie algebra $(C^{\ast +1 }_\mathrm{Hom} (\mathfrak{g}, \mathfrak{g}), [~,~]_\mathsf{NR} , d_1 = 0, d_2 = 0)$.
\end{thm}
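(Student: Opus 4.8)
The plan is to unwind the definition of a Maurer-Cartan element in the bidifferential graded Lie algebra $(C^{\ast+1}_{\mathrm{Hom}}(\mathfrak{g},\mathfrak{g}), [~,~]_{\mathsf{NR}}, d_1=0, d_2=0)$ and match each condition against the data defining a compatible Hom-Lie algebra structure on $\mathfrak{g}$. A Maurer-Cartan element here is a pair $(\mu_1,\mu_2) \in C^2_{\mathrm{Hom}}(\mathfrak{g},\mathfrak{g}) \oplus C^2_{\mathrm{Hom}}(\mathfrak{g},\mathfrak{g})$, that is, a pair of skew-symmetric bilinear maps $\mathfrak{g}\otimes\mathfrak{g}\to\mathfrak{g}$ each commuting with $\alpha$ in the sense $\alpha\circ\mu_i = \mu_i\circ\alpha^{\wedge 2}$ (this last condition is exactly the multiplicativity axiom $\alpha[x,y]_i = [\alpha x,\alpha y]_i$). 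Since $d_1 = d_2 = 0$, the three Maurer-Cartan conditions from the definition reduce to: (i) $\tfrac12[\mu_1,\mu_1]_{\mathsf{NR}} = 0$; (ii) $\tfrac12[\mu_2,\mu_2]_{\mathsf{NR}} = 0$; (iii) $[\mu_1,\mu_2]_{\mathsf{NR}} = 0$.

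The first two steps are to recall (from the Proposition on Nijenhuis-Richardson Maurer-Cartan elements recalled earlier in the paper) that condition (i) is equivalent to $[~,~]_1 := \mu_1$ being a Hom-Lie bracket on $\mathfrak{g}$, and condition (ii) likewise for $[~,~]_2 := \mu_2$; concretely $\tfrac12[\mu_i,\mu_i]_{\mathsf{NR}}(x,y,z)$ is, up to sign, the left-hand side of the Hom-Jacobi identity for $[~,~]_i$. The third step is the heart of the matter: I would compute $[\mu_1,\mu_2]_{\mathsf{NR}}(x,y,z)$ explicitly from the bracket formula in Section~\ref{sec-2}. Since $m = n = 1$, we have $[\mu_1,\mu_2]_{\mathsf{NR}} = \mu_1\diamond\mu_2 + \mu_2\diamond\mu_1$, and $(\mu_1\diamond\mu_2)(x,y,z) = \sum_{\sigma\in Sh(2,1)} (-1)^\sigma \mu_1(\mu_2(x_{\sigma(1)},x_{\sigma(2)}), \alpha(x_{\sigma(3)}))$, which expands to $\mu_1(\mu_2(x,y),\alpha z) - \mu_1(\mu_2(x,z),\alpha y) + \mu_1(\mu_2(y,z),\alpha x)$, i.e. $[[x,y]_2,\alpha z]_1 + [[y,z]_2,\alpha x]_1 + [[z,x]_2,\alpha y]_1$; symmetrically $(\mu_2\diamond\mu_1)(x,y,z)$ gives the three terms with the indices $1$ and $2$ swapped. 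Adding the two, $[\mu_1,\mu_2]_{\mathsf{NR}}(x,y,z) = 0$ is precisely the compatibility condition~\eqref{comp-cond-e}.

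Putting these together: a Maurer-Cartan element $(\mu_1,\mu_2)$ yields two Hom-Lie brackets $[~,~]_1, [~,~]_2$ (from (i), (ii) together with the cochain condition encoding multiplicativity) which are compatible (from (iii)), hence a compatible Hom-Lie algebra $(\mathfrak{g}, [~,~]_1, [~,~]_2, \alpha)$; conversely every compatible Hom-Lie algebra structure with twisting map $\alpha$ gives such a pair, and the two assignments are mutually inverse since the underlying data (the two brackets) are literally the same. I don't anticipate a genuine obstacle here — the only thing requiring care is bookkeeping of the signs and of the shuffle permutations in the $\diamond$-product, and verifying that the $n=1$ power $\alpha^n = \alpha$ appearing on the "tail" arguments matches the single $\alpha$ appearing in~\eqref{comp-cond-e} and in the Hom-Jacobi identity. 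One should also note explicitly that the constraint $\alpha\circ\mu_i = \mu_i\circ\alpha^{\wedge 2}$, built into the definition of $C^2_{\mathrm{Hom}}(\mathfrak{g},\mathfrak{g})$, is exactly the multiplicativity requirement in the definition of a Hom-Lie algebra, so no axiom is lost.
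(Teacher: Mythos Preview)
Your proposal is correct and follows essentially the same approach as the paper: identify a pair of multiplicative skew-symmetric brackets with elements $\mu_1,\mu_2\in C^2_{\mathrm{Hom}}(\mathfrak{g},\mathfrak{g})$, observe that with $d_1=d_2=0$ the Maurer-Cartan conditions reduce to $[\mu_1,\mu_1]_{\mathsf{NR}}=[\mu_2,\mu_2]_{\mathsf{NR}}=[\mu_1,\mu_2]_{\mathsf{NR}}=0$, and match these respectively with the two Hom-Jacobi identities and the compatibility~\eqref{comp-cond-e}. Your write-up is in fact more explicit than the paper's, which simply asserts the third equivalence without expanding the shuffle sum.
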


\begin{proof}
Let $[~,~]_1$ and $[~,~]_2$ be two multiplicative skew-symmetric bilinear brackets on $\mathfrak{g}$. Then the brackets $[~,~]_1$ and $[~,~]_2$ correspond to elements (say, $\mu_1$ and $\mu_2$, respectively) in $C^2_\mathrm{Hom} (\mathfrak{g}, \mathfrak{g})$. Then
\begin{align*}
[~,~]_1 \text{ is a Hom-Lie bracket } \Leftrightarrow~& ~~ [\mu_1, \mu_1]_\mathsf{NR} = 0;\\
[~,~]_2 \text{ is a Hom-Lie bracket } \Leftrightarrow~& ~~ [\mu_2, \mu_2]_\mathsf{NR} = 0;\\
\text{ compatibility condition } (\ref{comp-cond-e}) ~\Leftrightarrow~& ~~  [\mu_1, \mu_2]_\mathsf{NR} = 0.
\end{align*}
Hence $(\mathfrak{g}, [~,~]_1, [~,~]_2, \alpha)$ is a compatible Hom-Lie algebra if and only if $(\mu_1, \mu_2)$ is a Maurer-Cartan element in the bidifferential graded Lie algebra $(C^{\ast +1 }_\mathrm{Hom} (\mathfrak{g}, \mathfrak{g}), [~,~]_\mathsf{NR} , d_1 = 0, d_2 = 0)$.
\end{proof}

Hence from Proposition \ref{mc-deform}, we get the following.

\begin{prop}
Let $(\mathfrak{g}, [~,~]_1, [~,~]_2, \alpha)$ be a compatible Hom-Lie algebra. Then for any multiplicative skew-symmetric bilinear operations $[~,~]_1'$ and $[~,~]_2'$ on $\mathfrak{g}$, the quadruple
\begin{align*}
(\mathfrak{g}, [~,~]_1 + [~,~]_1', [~,~]_2 + [~,~]_2', \alpha)
\end{align*}
is a compatible Hom-Lie algebra if and only if $(\mu_1', \mu_2')$ is a Maurer-Cartan element in the bidifferential graded Lie algebra $(C^{\ast +1}_\mathrm{Hom} (\mathfrak{g}, \mathfrak{g}), [~,~]_\mathsf{NR}, d_1 = [\mu_1, -], d_2 = [\mu_2, -])$. Here $\mu_1', \mu_2' \in C^2_\mathrm{Hom}(\mathfrak{g}, \mathfrak{g})$ denote the elements corresponding to the brackets $[~,~]_1'$ and $[~,~]_2'$, respectively.
\end{prop}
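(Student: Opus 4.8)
The plan is to deduce this proposition as a direct application of Proposition~\ref{mc-deform} (the twisting procedure for bidifferential graded Lie algebras) combined with the Maurer--Cartan characterization of compatible Hom-Lie algebras just established. First I would note that, by the preceding theorem, the given compatible Hom-Lie algebra $(\mathfrak{g}, [~,~]_1, [~,~]_2, \alpha)$ corresponds to a Maurer--Cartan element $(\mu_1, \mu_2)$ in the bidifferential graded Lie algebra $(C^{\ast+1}_\mathrm{Hom}(\mathfrak{g}, \mathfrak{g}), [~,~]_\mathsf{NR}, d_1 = 0, d_2 = 0)$. Since $d_1 = d_2 = 0$ here, Proposition~\ref{mc-deform} tells us that $(C^{\ast+1}_\mathrm{Hom}(\mathfrak{g}, \mathfrak{g}), [~,~]_\mathsf{NR}, d_1^{\mu_1} = [\mu_1, -]_\mathsf{NR}, d_2^{\mu_2} = [\mu_2, -]_\mathsf{NR})$ is again a bidifferential graded Lie algebra, which is exactly the one named in the statement.

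Next I would apply the ``if and only if'' clause of Proposition~\ref{mc-deform}: with $\theta_1 = \mu_1$, $\theta_2 = \mu_2$, and $\vartheta_1 = \mu_1'$, $\vartheta_2 = \mu_2'$, the pair $(\mu_1 + \mu_1', \mu_2 + \mu_2')$ is a Maurer--Cartan element in the untwisted bidifferential graded Lie algebra $(C^{\ast+1}_\mathrm{Hom}(\mathfrak{g}, \mathfrak{g}), [~,~]_\mathsf{NR}, 0, 0)$ if and only if $(\mu_1', \mu_2')$ is a Maurer--Cartan element in the twisted one $(C^{\ast+1}_\mathrm{Hom}(\mathfrak{g}, \mathfrak{g}), [~,~]_\mathsf{NR}, [\mu_1, -]_\mathsf{NR}, [\mu_2, -]_\mathsf{NR})$. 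Now I observe that $\mu_1 + \mu_1'$ and $\mu_2 + \mu_2'$ are precisely the elements of $C^2_\mathrm{Hom}(\mathfrak{g}, \mathfrak{g})$ corresponding to the bilinear operations $[~,~]_1 + [~,~]_1'$ and $[~,~]_2 + [~,~]_2'$, using bilinearity of the correspondence between elements of $C^2_\mathrm{Hom}$ and multiplicative skew-symmetric brackets. Applying the preceding theorem once more, $(\mu_1 + \mu_1', \mu_2 + \mu_2')$ is Maurer--Cartan in the untwisted bidifferential graded Lie algebra if and only if $(\mathfrak{g}, [~,~]_1 + [~,~]_1', [~,~]_2 + [~,~]_2', \alpha)$ is a compatible Hom-Lie algebra. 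Chaining the two equivalences gives the claim.

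The only genuine point requiring care — and the ``main obstacle,'' though it is minor — is bookkeeping about $C^2_\mathrm{Hom}(\mathfrak{g}, \mathfrak{g})$: one must check that $\mu_1 + \mu_1'$ still lies in $C^2_\mathrm{Hom}(\mathfrak{g}, \mathfrak{g})$ (i.e.\ still satisfies $\alpha \circ (\mu_1 + \mu_1') = (\mu_1 + \mu_1') \circ \alpha^{\wedge 2}$), which holds because both summands do and the defining condition is linear; and that skew-symmetry is preserved, which is likewise automatic. One should also remark that the hypothesis ``$[~,~]_1'$ and $[~,~]_2'$ are multiplicative'' is exactly what guarantees $\mu_1', \mu_2' \in C^2_\mathrm{Hom}(\mathfrak{g}, \mathfrak{g})$, so that the twisted Maurer--Cartan equation is being tested in the correct graded Lie algebra. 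Beyond this, no new computation is needed: the proposition is a formal consequence of the two results already in hand, so the write-up can simply say ``This follows immediately from the previous theorem and Proposition~\ref{mc-deform}.''
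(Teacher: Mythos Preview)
Your proposal is correct and matches the paper's approach exactly: the paper states this proposition immediately after the Maurer--Cartan characterization theorem with only the phrase ``Hence from Proposition~\ref{mc-deform}, we get the following,'' giving no further argument. Your write-up is in fact more detailed than the paper's, but the logical route---apply the preceding theorem together with the twisting result in Proposition~\ref{mc-deform}---is precisely what the paper intends.
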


In the following, we define representations of a compatible Hom-Lie algebra.

\begin{defn}
A representation of the compatible Hom-Lie algebra $(\mathfrak{g}, [~,~]_1, [~,~]_2, \alpha)$ consists of a quadruple $(V, \bullet_1, \bullet_2, \beta)$ such that
\begin{itemize}
\item[(i)] $(V, \bullet_1, \beta)$ is a representation of the Hom-Lie algebra $(\mathfrak{g}, [~,~]_1, \alpha)$;
\item[(ii)] $(V, \bullet_2, \beta)$ is a representation of the Hom-Lie algebra $(\mathfrak{g}, [~,~]_2, \alpha)$;
\item[(iii)] the following compatibility condition holds
\begin{align*}
[x,y]_1 \bullet_2 \beta (v) + [x,y]_2 \bullet_1 \beta (v) = \alpha(x) \bullet_1 (y \bullet_2 v) - \alpha(y) \bullet_2 ( x \bullet_1 v) + \alpha(x) \bullet_2 (y \bullet_1 v) - \alpha(y) \bullet_1 ( x \bullet_2 v),
\end{align*}
for all $x,y \in \mathfrak{g}$ and $v \in V$.
\end{itemize}
\end{defn}

It follows that any compatible Hom-Lie algebra  $(\mathfrak{g}, [~,~]_1, [~,~]_2, \alpha)$  is a representation of itself, where $\bullet_1 = [~,~]_1$ and $\bullet_2 = [~,~]_2$. This is called the adjoint representation.

\begin{remark}\label{sum-rep}
Let $(\mathfrak{g}, [~,~]_1, [~,~]_2, \alpha)$ be a compatible Hom-Lie algebra and $(V, \bullet_1, \bullet_2, \beta)$ be a representation of it. Then for any $\lambda, \eta \in \mathbb{K}$, the triple $(\mathfrak{g}, \lambda [~,~]_1 + \eta [~,~]_2, \alpha)$ is a Hom-Lie algebra and $(V, \lambda ~\bullet_1 + \eta~ \bullet_2 , \beta)$ is a representation of it.
\end{remark}

The proof of the following proposition is similar to the standard case.

\begin{prop}\label{prop-comp-semi}
Let $(\mathfrak{g},[~,~]_1, [~,~]_2, \alpha)$ be a compatible Hom-Lie algebra and $(V, \bullet_1, \bullet_2, \beta)$ be a representation of it. Then the direct sum $\mathfrak{g} \oplus V$ carries a compatible Hom-Lie algebra structure with the linear homomorphism $\alpha \oplus \beta$, and Hom-Lie brackets
\begin{align*}
[(x,u), (y,v)]_i^\ltimes := ([x, y]_i, x \bullet_i v - y \bullet_i u),~ \text{ for } i=1,2 \text{ and } (x,u), (y, v) \in \mathfrak{g} \oplus V.
\end{align*}
This is called the semidirect product.
\end{prop}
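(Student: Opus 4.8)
The plan is to verify directly that the quadruple $(\mathfrak{g} \oplus V, [~,~]_1^\ltimes, [~,~]_2^\ltimes, \alpha \oplus \beta)$ satisfies the axioms of a compatible Hom-Lie algebra, reducing everything to the hypotheses on $(\mathfrak{g}, [~,~]_1, [~,~]_2, \alpha)$ and $(V, \bullet_1, \bullet_2, \beta)$. First I would treat each index $i = 1, 2$ separately and recall the classical fact (whose Hom-version is standard, cf. the semidirect product construction after the representation definition for a single Hom-Lie algebra) that $(\mathfrak{g} \oplus V, [~,~]_i^\ltimes, \alpha \oplus \beta)$ is a Hom-Lie algebra precisely because $(V, \bullet_i, \beta)$ is a representation of $(\mathfrak{g}, [~,~]_i, \alpha)$: multiplicativity of $\alpha \oplus \beta$ for $[~,~]_i^\ltimes$ follows from multiplicativity of $\alpha$, the relation $\beta(x \bullet_i v) = \alpha(x) \bullet_i \beta(v)$, and skew-symmetry; the Hom-Jacobi identity for $[~,~]_i^\ltimes$ splits into the $\mathfrak{g}$-component (which is the Hom-Jacobi identity for $[~,~]_i$) and the $V$-component (which is exactly the representation identity $[x,y]_i \bullet_i \beta(v) = \alpha(x) \bullet_i (y \bullet_i v) - \alpha(y) \bullet_i (x \bullet_i v)$, after expanding the cyclic sum and using skew-symmetry of the bracket).

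The only genuinely new content is the compatibility condition \eqref{comp-cond-e} for the pair $([~,~]_1^\ltimes, [~,~]_2^\ltimes)$ on $\mathfrak{g} \oplus V$. I would take arbitrary elements $(x,u), (y,v), (z,w) \in \mathfrak{g} \oplus V$ and expand the six-term cyclic expression
\[
\sum_{\mathrm{cyc}} \big( [[(x,u),(y,v)]_1^\ltimes, (\alpha \oplus \beta)(z,w)]_2^\ltimes + [[(x,u),(y,v)]_2^\ltimes, (\alpha \oplus \beta)(z,w)]_1^\ltimes \big).
\]
Projecting onto $\mathfrak{g}$ gives precisely the compatibility condition \eqref{comp-cond-e} for $([~,~]_1, [~,~]_2)$ on $\mathfrak{g}$, which holds by hypothesis. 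Projecting onto $V$ gives a sum of terms of the form $[x,y]_i \bullet_j \beta(w)$ and $\alpha(x) \bullet_i (y \bullet_j w)$; after organizing the cyclic sum and repeatedly applying the mixed compatibility condition (iii) from the definition of a representation — namely $[x,y]_1 \bullet_2 \beta(v) + [x,y]_2 \bullet_1 \beta(v) = \alpha(x) \bullet_1 (y \bullet_2 v) - \alpha(y) \bullet_2 (x \bullet_1 v) + \alpha(x) \bullet_2 (y \bullet_1 v) - \alpha(y) \bullet_1 (x \bullet_2 v)$ — together with skew-symmetry of the two brackets, all terms cancel in pairs.

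The main obstacle, such as it is, is purely bookkeeping: the $V$-component of the compatibility check produces on the order of eighteen terms once the cyclic sum over the three arguments and the two orderings of the $\bullet$-actions are fully expanded, and one must match them correctly against three instances of condition (iii) (one for each cyclic placement of the "$\beta$-slot") without sign errors. A clean way to organize this is to first use the Maurer-Cartan reformulation: by the theorem above, $([~,~]_1, [~,~]_2)$ corresponds to a Maurer-Cartan element $(\mu_1, \mu_2)$ with $[\mu_i, \mu_j]_\mathsf{NR} = 0$ for all $i,j$, and one checks that the brackets $[~,~]_i^\ltimes$ correspond to elements $\mu_i^\ltimes \in C^2_\mathrm{Hom}(\mathfrak{g} \oplus V, \mathfrak{g} \oplus V)$ with $[\mu_i^\ltimes, \mu_j^\ltimes]_\mathsf{NR}$ vanishing iff the stated identities on $V$ hold; this repackages the eighteen-term verification into the single bracket computation $[\mu_1^\ltimes, \mu_2^\ltimes]_\mathsf{NR} = 0$, whose $\mathfrak{g}$-part is $[\mu_1,\mu_2]_\mathsf{NR} = 0$ and whose $V$-part is exactly condition (iii). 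Either way the computation is routine, and I would simply remark that it is a direct check, as the proposition statement already anticipates.
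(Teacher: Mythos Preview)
Your proposal is correct and matches the paper's treatment: the paper simply states that ``the proof of the following proposition is similar to the standard case'' and omits any further argument, so your direct verification of the Hom-Lie axioms for each $i$ together with the compatibility condition (via either the explicit cyclic expansion or the Maurer-Cartan repackaging) is exactly the intended routine check.
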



\section{Cohomology of compatible Hom-Lie algebras}\label{sec-4}
In this section, we introduce the cohomology of a compatible Hom-Lie algebra with coefficients in a representation. We also define abelian extensions of a compatible Hom-Lie algebra and prove that isomorphism classes of abelian extensions are classified by the second cohomology group.

Let  $(\mathfrak{g}, [~,~]_1, [~,~]_2, \alpha)$  be a compatible Hom-Lie algebra and $(V, \bullet_1, \bullet_2, \beta)$ be a representation of it.  Let ${}^1\delta_\mathrm{Hom} : C^n_\mathrm{Hom}(\mathfrak{g}, V) \rightarrow C^{n+1}_\mathrm{Hom}(\mathfrak{g}, V)$  ~ (resp. ${}^2\delta_\mathrm{Hom} : C^n_\mathrm{Hom}(\mathfrak{g}, V) \rightarrow C^{n+1}_\mathrm{Hom}(\mathfrak{g}, V)$ ), for $n \geq 0$, be the coboundary operator for the Chevalley-Eilenberg cohomology of the Hom-Lie algebra $(\mathfrak{g}, [~,~]_1, \alpha)$ with coefficients in the representation $(V, \bullet_1, \beta)$  ~(resp. of the Hom-Lie algebra $(\mathfrak{g},[~,~]_2, \alpha)$ with coefficients in the representation $(V, \bullet_2, \beta)$ ). Then we have
\begin{align*}
( {}^1\delta_\mathrm{Hom})^2 = 0 ~~~~ \text{ and } ~~~~ ({}^2\delta_\mathrm{Hom})^2 = 0.
\end{align*}

Moreover, we have the following.

\begin{prop}\label{d-comp}
The coboundary operators ${}^1\delta_\mathrm{Hom}$ and ${}^2\delta_\mathrm{Hom}$ satisfy the following compatibility
\begin{align*}
{}^1\delta_\mathrm{Hom} \circ {}^2\delta_\mathrm{Hom} + {}^2\delta_\mathrm{Hom} \circ {}^1\delta_\mathrm{Hom} = 0.
\end{align*}
\end{prop}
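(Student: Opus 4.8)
The plan is to reduce the statement to a fact about the Nijenhuis–Richardson bracket rather than verifying it by a direct cochain computation. The key observation is that the Chevalley–Eilenberg coboundary of a Hom-Lie algebra with coefficients in a representation can be realized, via the standard semidirect product trick, as a graded bracket with a Maurer–Cartan element. Concretely, I would work in the graded Lie algebra $(C^{\ast+1}_\mathrm{Hom}(\mathfrak{g}\oplus V, \mathfrak{g}\oplus V), [~,~]_\mathsf{NR})$ associated to the vector space $\mathfrak{g}\oplus V$ with twisting map $\alpha\oplus\beta$. Let $\mu_1, \mu_2 \in C^2_\mathrm{Hom}(\mathfrak{g}\oplus V, \mathfrak{g}\oplus V)$ be the elements corresponding to the semidirect product Hom-Lie brackets $[~,~]_1^\ltimes$ and $[~,~]_2^\ltimes$ from Proposition \ref{prop-comp-semi}. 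Since $(\mathfrak{g}\oplus V, [~,~]_1^\ltimes, [~,~]_2^\ltimes, \alpha\oplus\beta)$ is a compatible Hom-Lie algebra, the Maurer–Cartan characterization gives $[\mu_1,\mu_1]_\mathsf{NR} = [\mu_2,\mu_2]_\mathsf{NR} = [\mu_1,\mu_2]_\mathsf{NR} = 0$.

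Next I would identify the operators ${}^1\delta_\mathrm{Hom}$ and ${}^2\delta_\mathrm{Hom}$ on $C^\ast_\mathrm{Hom}(\mathfrak{g}, V)$ (viewed inside the cochains of $\mathfrak{g}\oplus V$ in the usual way, as maps $\wedge^n\mathfrak{g}\to V \subset \mathfrak{g}\oplus V$) with the restrictions of $(-1)^{n-1}[\mu_i, -]_\mathsf{NR}$, exactly as recorded in Section \ref{sec-2} for the adjoint coboundary operator of the Hom-Lie algebra $(\mathfrak{g}\oplus V, [~,~]_i^\ltimes, \alpha\oplus\beta)$. One has to check that this bracket preserves the subspace $C^\ast_\mathrm{Hom}(\mathfrak{g}, V)$ of those cochains valued in $V$ and landing only on arguments from $\mathfrak{g}$; this is immediate from the shape of the semidirect bracket (the $\mathfrak{g}$-component of $[(x,u),(y,v)]_i^\ltimes$ depends only on $x,y$, and the $V$-component is linear in each of $u,v$), so a cochain on $\mathfrak{g}$ with values in $V$ is sent to another such cochain. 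With this identification in place, for $f \in C^n_\mathrm{Hom}(\mathfrak{g}, V)$ we compute, using the graded Jacobi identity for $[~,~]_\mathsf{NR}$,
\begin{align*}
{}^1\delta_\mathrm{Hom}({}^2\delta_\mathrm{Hom} f) + {}^2\delta_\mathrm{Hom}({}^1\delta_\mathrm{Hom} f)
&= (-1)^{n}(-1)^{n-1}\big( [\mu_1,[\mu_2,f]_\mathsf{NR}]_\mathsf{NR} + [\mu_2,[\mu_1,f]_\mathsf{NR}]_\mathsf{NR} \big) \\
&= -\,[\,[\mu_1,\mu_2]_\mathsf{NR}, f\,]_\mathsf{NR} = 0,
\end{align*}
where the sign bookkeeping uses that $\mu_1,\mu_2$ have degree $1$ in the shifted grading and the last equality uses $[\mu_1,\mu_2]_\mathsf{NR}=0$.

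The main obstacle, and the place where care is needed, is the sign and degree bookkeeping: one must be consistent about the shifted grading $C^{\ast+1}_\mathrm{Hom}$, about the factor $(-1)^{n-1}$ relating $\delta_\mathrm{Hom}$ to $[\mu,-]_\mathsf{NR}$ on $n$-cochains, and about how the graded Jacobi identity expands $[\mu_1,[\mu_2,f]]$ so that the two cross terms add rather than cancel. If one prefers to avoid the semidirect-product reduction, the alternative is a direct but longer route: write out $({}^1\delta_\mathrm{Hom}{}^2\delta_\mathrm{Hom} + {}^2\delta_\mathrm{Hom}{}^1\delta_\mathrm{Hom})f$ on $(n+2)$ arguments, and observe that it is precisely the "polarization" of the identity $({}^i\delta_\mathrm{Hom})^2 = 0$ — that is, the bilinear expression obtained by replacing one copy of the bracket $[~,~]_i$ (and the corresponding action $\bullet_i$) in $({}^1\delta_\mathrm{Hom})^2 f = 0$ by $[~,~]_2, \bullet_2$ in all possible ways. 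Each term arising from this polarization is controlled either by the representation axioms for $\bullet_1, \bullet_2$ or by the compatibility condition (iii) in the definition of a representation, and they cancel in groups exactly as in the proof that each ${}^i\delta_\mathrm{Hom}$ squares to zero. I would present the first (conceptual) argument as the proof and, if desired, remark that the second is available.
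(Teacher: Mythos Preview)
Your proposal is correct and follows essentially the same route as the paper: pass to the semidirect product compatible Hom-Lie algebra on $\mathfrak{g}\oplus V$, identify ${}^i\delta_\mathrm{Hom}$ with $(-1)^{n-1}[\pi_i,-]_\mathsf{NR}$ on lifted cochains (the paper makes this precise via the lift $\widetilde{f}\in C^n_\mathrm{Hom}(\mathfrak{g}\oplus V,\mathfrak{g}\oplus V)$, which is exactly your ``viewed inside'' step), and then use the graded Jacobi identity together with $[\pi_1,\pi_2]_\mathsf{NR}=0$. The only cosmetic difference is that the paper writes the argument in terms of $\widetilde{f}$ and the injectivity $f=0\Leftrightarrow\widetilde{f}=0$, whereas you speak of restriction to the subspace $C^\ast_\mathrm{Hom}(\mathfrak{g},V)$; these are equivalent.
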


Before we prove the above proposition, we first observe the followings. For a compatible Hom-Lie algebra $(\mathfrak{g}, [~,~]_1, [~,~]_2, \alpha)$ and a representation $(V, \bullet_1, \bullet_2, \beta)$, we consider the semidirect product compatible Hom-Lie algebra structure on $\mathfrak{g} \oplus V$ given in Proposition \ref{prop-comp-semi}. We denote by $\pi_1, \pi_2 \in C^2_\mathrm{Hom} (\mathfrak{g} \oplus V, \mathfrak{g} \oplus V)$ the elements corresponding to the Hom-Lie brackets $[~,~]_1^\ltimes$ and $[~,~]_2^\ltimes$ on $\mathfrak{g} \oplus V$, respectively. Let 
\begin{align*}
\delta^1_\mathrm{Hom} : C^n_\mathrm{Hom} (\mathfrak{g} \oplus V, \mathfrak{g} \oplus V) \rightarrow C^{n+1}_\mathrm{Hom} (\mathfrak{g} \oplus V, \mathfrak{g} \oplus V), ~~ \text{ for } n \geq 0,\\
\delta^2_\mathrm{Hom} : C^n_\mathrm{Hom} (\mathfrak{g} \oplus V, \mathfrak{g} \oplus V) \rightarrow C^{n+1}_\mathrm{Hom} (\mathfrak{g} \oplus V, \mathfrak{g} \oplus V), ~~ \text{ for } n \geq 0,
\end{align*}
denote respectively the coboundary operator for the Chevalley-Eilenberg cohomology of the Hom-Lie algebra $(\mathfrak{g} \oplus V, [~,~]_1^\ltimes, \alpha \oplus \beta)$ (resp. of the Hom-Lie algebra $(\mathfrak{g} \oplus V, [~,~]_2^\ltimes, \alpha \oplus \beta)$ ) with coefficients in itself. Note that any map $f \in C^n_\mathrm{Hom} (\mathfrak{g} \oplus V)$ can be lifted to a map $\widetilde{f} \in  C^n_\mathrm{Hom} (\mathfrak{g} \oplus V, \mathfrak{g} \oplus V) $ by
\begin{align*}
\widetilde{f} \big( (x_1, v_1), \ldots, (x_n, v_n) \big) = \big( 0, f (x_1, \ldots, x_n) \big).
\end{align*}
Then $f =0$ if and only if $\widetilde{f} =0$.

With these notations, for any $f \in C^n_\mathrm{Hom}(\mathfrak{g},V)$, we have
\begin{align*}
\widetilde{( {}^1\delta_\mathrm{Hom} f)}  = \delta^1_\mathrm{Hom} (\widetilde{f}) = (-1)^{n-1} [\pi_1, \widetilde{f}]_\mathsf{NR}  ~~~~ \text{ and } ~~~~ \widetilde{( {}^2\delta_\mathrm{Hom} f)}  = \delta^2_\mathrm{Hom} (\widetilde{f}) = (-1)^{n-1} [\pi_2, \widetilde{f}]_\mathsf{NR}.
\end{align*}

\medskip

\begin{proof} {\em (of Proposition \ref{d-comp})} For any $f \in C^n_\mathrm{Hom} (\mathfrak{g}, V)$, we have
\begin{align*}
&\widetilde{   ( {}^1\delta_\mathrm{Hom} \circ {}^2\delta_\mathrm{Hom} + {}^2\delta_\mathrm{Hom} \circ {}^1\delta_\mathrm{Hom})(f) } \\
&= \widetilde{{}^1\delta_\mathrm{Hom} ( {}^2\delta_\mathrm{Hom}{f})} ~ + ~ \widetilde{{}^2\delta_\mathrm{Hom} ( {}^1\delta_\mathrm{Hom} {f})} \\
&= (-1)^{n} ~  [\pi_1, \widetilde{{}^2\delta_\mathrm{Hom}{f}}    ]_\mathsf{NR} ~+~(-1)^{n} ~  [\pi_2, \widetilde{{}^1\delta_\mathrm{Hom}{f}}    ]_\mathsf{NR}     \\
&= - [\pi_1, [\pi_2, \widetilde{f}]_\mathsf{NR} ]_\mathsf{NR}  - [\pi_2, [\pi_1, \widetilde{f}]_\mathsf{NR} ]_\mathsf{NR}  \\
&= - [[\pi_1, \pi_2]_\mathsf{NR}, \widetilde{f}]_\mathsf{NR} ~+~ [\pi_2, [\pi_1, \widetilde{f}]_\mathsf{NR} ]_\mathsf{NR} - [\pi_2, [\pi_1, \widetilde{f}]_\mathsf{NR} ]_\mathsf{NR} \\
&= 0 ~~~~ \qquad (\because ~[\pi_1, \pi_2]_\mathsf{NR} = 0).
\end{align*}
Therefore, it follows that  $  ( {}^1\delta_\mathrm{Hom} \circ {}^2\delta_\mathrm{Hom} + {}^2\delta_\mathrm{Hom} \circ {}^1\delta_\mathrm{Hom})(f) = 0$. Hence the result follows.
\end{proof}

We are now in a position to define the cohomology of a compatible Hom-Lie algebra $(\mathfrak{g}, [~,~]_1, [~,~]_2, \alpha)$ with coefficients in a representation $(V, \bullet_1, \bullet_2, \beta)$. For each $n \geq 0$, we define an abelian group $C^n_\mathrm{cHom} (\mathfrak{g}, V)$ as follows:
\begin{align*}
C^0_\mathrm{cHom} (\mathfrak{g}, V) :=~& C^0_\mathrm{Hom} (\mathfrak{g}, V) \cap \{ v \in V | ~ x \bullet_1 v = x \bullet_2 v, \forall x \in \mathfrak{g} \} \\
=~& \{ v \in V |~ \beta (v) = v \text{ and } x \bullet_1 v = x \bullet_2 v, \forall x \in \mathfrak{g} \},
\end{align*}
\begin{align*}
C^n_\mathrm{cHom} (\mathfrak{g}, V) := \underbrace{C^n_\mathrm{Hom} (\mathfrak{g}, V) \oplus \cdots \oplus C^n_\mathrm{Hom} (\mathfrak{g}, V )}_{n \text{ copies }}, ~ \text{ for } n \geq 1.
\end{align*}

Define a map $\delta_{\mathrm{cHom}} : C^n_\mathrm{cHom} (\mathfrak{g}, V) \rightarrow C^{n+1}_\mathrm{cHom} (\mathfrak{g}, V)$, for $n \geq 0$ by
\begin{align*}
\delta_{\mathrm{cHom}}(v) (x) := x \bullet_1 v =  x \bullet_2 v, ~\text{ for } v \in C^0_{\mathrm{cHom}} (\mathfrak{g}, V) ~\text{ and } x \in \mathfrak{g}, \\
\delta_{\mathrm{cHom}} (f_1, \ldots, f_n ) := \big( {}^1\delta_\mathrm{Hom} f_1, \ldots, \underbrace{{}^1\delta_\mathrm{Hom} f_i + {}^2\delta_\mathrm{Hom} f_{i-1}}_{i\text{-th position}}, \ldots, {}^2\delta_\mathrm{Hom} f_n    \big),
\end{align*}
for $(f_1, \ldots, f_n) \in C^n_{\mathrm{cHom}} (\mathfrak{g}, V).$

Then we have the following.

\begin{prop}
The map $\delta_{\mathrm{cHom}}$ is a coboundary map, i.e., $(\delta_{\mathrm{cHom}})^2 = 0$.
\end{prop}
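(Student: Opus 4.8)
The plan is to deduce $(\delta_{\mathrm{cHom}})^2 = 0$ from three facts already in hand: $({}^1\delta_\mathrm{Hom})^2 = 0$, $({}^2\delta_\mathrm{Hom})^2 = 0$, and the anticommutation ${}^1\delta_\mathrm{Hom} \circ {}^2\delta_\mathrm{Hom} + {}^2\delta_\mathrm{Hom} \circ {}^1\delta_\mathrm{Hom} = 0$ of Proposition \ref{d-comp}. First I would treat the degree-zero case separately. For $v \in C^0_{\mathrm{cHom}}(\mathfrak{g}, V)$ the cochain $\delta_{\mathrm{cHom}}(v) \in C^1_{\mathrm{cHom}}(\mathfrak{g}, V)$ is the single map $x \mapsto x \bullet_1 v = x \bullet_2 v$, which coincides with both ${}^1\delta_\mathrm{Hom}(v)$ and ${}^2\delta_\mathrm{Hom}(v)$; hence $\delta_{\mathrm{cHom}}(\delta_{\mathrm{cHom}} v) = \big( {}^1\delta_\mathrm{Hom}{}^1\delta_\mathrm{Hom} v, \ {}^2\delta_\mathrm{Hom}{}^2\delta_\mathrm{Hom} v \big) = (0, 0)$.

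For $n \geq 1$ I would set $f_0 := 0$ and $f_{n+1} := 0$, so that the formula for $\delta_{\mathrm{cHom}}$ becomes uniform: the $j$-th component of $\delta_{\mathrm{cHom}}(f_1, \ldots, f_n) \in C^{n+1}_{\mathrm{cHom}}(\mathfrak{g}, V)$ is $g_j := {}^1\delta_\mathrm{Hom} f_j + {}^2\delta_\mathrm{Hom} f_{j-1}$ for all $1 \leq j \leq n+1$ (the end components $g_1 = {}^1\delta_\mathrm{Hom} f_1$ and $g_{n+1} = {}^2\delta_\mathrm{Hom} f_n$ coming out automatically). Applying $\delta_{\mathrm{cHom}}$ once more, with the analogous convention $g_0 = g_{n+2} = 0$, the $j$-th component of $(\delta_{\mathrm{cHom}})^2(f_1, \ldots, f_n)$ is ${}^1\delta_\mathrm{Hom} g_j + {}^2\delta_\mathrm{Hom} g_{j-1}$, which after substituting and expanding equals
\begin{align*}
({}^1\delta_\mathrm{Hom})^2 f_j \ + \ \big( {}^1\delta_\mathrm{Hom} \circ {}^2\delta_\mathrm{Hom} + {}^2\delta_\mathrm{Hom} \circ {}^1\delta_\mathrm{Hom} \big) f_{j-1} \ + \ ({}^2\delta_\mathrm{Hom})^2 f_{j-2}.
\end{align*}
The first and last summands vanish by $({}^1\delta_\mathrm{Hom})^2 = ({}^2\delta_\mathrm{Hom})^2 = 0$, and the middle summand vanishes by Proposition \ref{d-comp}. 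Since this holds for every $j$, we conclude $(\delta_{\mathrm{cHom}})^2 = 0$.

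I do not anticipate a genuine obstacle: once $\delta_{\mathrm{cHom}}$ is recognized as the differential of the total complex built from the bicomplex $({}^1\delta_\mathrm{Hom}, {}^2\delta_\mathrm{Hom})$, the identity $(\delta_{\mathrm{cHom}})^2 = 0$ is forced by exactly the three relations quoted above, and Proposition \ref{d-comp} supplies the only non-formal one. The sole point demanding care is the index bookkeeping at the two ends of each tuple: one should verify that with the conventions $f_{-1} = f_0 = 0$ and $f_{n+1} = f_{n+2} = 0$ the displayed expression is correct for $j = 1$ and $j = n+2$ as well as for the interior indices, which it is.
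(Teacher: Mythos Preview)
Your proof is correct and follows essentially the same approach as the paper: treat degree zero separately, then expand $(\delta_{\mathrm{cHom}})^2$ componentwise and observe that each component is a combination of $({}^1\delta_\mathrm{Hom})^2$, $({}^2\delta_\mathrm{Hom})^2$, and ${}^1\delta_\mathrm{Hom}\circ{}^2\delta_\mathrm{Hom} + {}^2\delta_\mathrm{Hom}\circ{}^1\delta_\mathrm{Hom}$ applied to the $f_i$'s. Your use of the conventions $f_0 = f_{n+1} = 0$ (and $f_{-1} = f_{n+2} = 0$) packages the boundary components more uniformly than the paper's explicit enumeration, but the substance is identical.
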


\begin{proof}
For any $v \in C^0_\mathrm{cHom} (\mathfrak{g}, V)$, we have
\begin{align*}
(\delta_\mathrm{cHom})^2 (v) = \delta_\mathrm{cHom} ( \delta_\mathrm{cHom} v ) =~& ({}^1\delta_\mathrm{Hom} \delta_\mathrm{cHom} v~,{}^2\delta_\mathrm{Hom} \delta_\mathrm{cHom} v ) \\
=~&  ( {}^1\delta_\mathrm{Hom} {}^1\delta_\mathrm{Hom} v~, {}^2\delta_\mathrm{Hom} {}^2\delta_\mathrm{Hom} v) = 0.
\end{align*}
Moreover, for any $(f_1, \ldots, f_n) \in C^n_\mathrm{cHom} (\mathfrak{g}, V)$, $n \geq 1$, we have
\begin{align*}
&(\delta_\mathrm{cHom})^2 (f_1, \ldots, f_n) \\
&= \delta_\mathrm{cHom}  \big(   {}^1 \delta_\mathrm{Hom} f_1, \ldots, {}^1 \delta_\mathrm{Hom} f_i + {}^2 \delta_\mathrm{Hom} f_{i-1}, \ldots, {}^2 \delta_\mathrm{Hom} f_n \big) \\
&= \big(   {}^1 \delta_\mathrm{Hom} {}^1 \delta_\mathrm{Hom} f_1~, {}^2 \delta_\mathrm{Hom} {}^1 \delta_\mathrm{Hom} f_1 + {}^1 \delta_\mathrm{Hom} {}^2 \delta_\mathrm{Hom} f_1 + {}^1 \delta_\mathrm{Hom} {}^1 \delta_\mathrm{Hom} f_2~, \ldots, \\
& \qquad \underbrace{  {}^2 \delta_\mathrm{Hom} {}^2 \delta_\mathrm{Hom}  f_{i-2} + {}^2 \delta_\mathrm{Hom} {}^1 \delta_\mathrm{Hom} f_{i-1} + {}^1 \delta_\mathrm{Hom} {}^2 \delta_\mathrm{Hom} f_{i-1} + {}^1 \delta_\mathrm{Hom} {}^1 \delta_\mathrm{Hom} f_i  }_{3 \leq i \leq n-1}, \ldots, \\
& \qquad {}^2 \delta_\mathrm{Hom} {}^2 \delta_\mathrm{Hom}  f_{n-1} + {}^2 \delta_\mathrm{Hom} {}^1 \delta_\mathrm{Hom} f_n + {}^1 \delta_\mathrm{Hom} {}^2 \delta_\mathrm{Hom} f_n ~, {}^2 \delta_\mathrm{Hom} {}^2 \delta_\mathrm{Hom} f_n \big) \\
&= 0 ~~~\quad (\text{from Proposition } \ref{d-comp}).
\end{align*}
This proves that $(\delta_\mathrm{cHom})^2 = 0$.
\end{proof}

It follows from the above proposition that $\{ C^\ast_{\mathrm{cHom}} (\mathfrak{g}, V), \delta_{\mathrm{cHom}} \}$ is a cochain complex. The corresponding cohomology groups
\begin{align*}
H^n_\mathrm{cHom} (\mathfrak{g}, V) := \frac{ Z^n_\mathrm{cHom} (\mathfrak{g}, V)}{B^n_\mathrm{cHom} (\mathfrak{g}, V)} = \frac{ \mathrm{Ker~ } \delta_\mathrm{cHom} : C^n_\mathrm{cHom} (\mathfrak{g}, V) \rightarrow  C^{n+1}_\mathrm{cHom} (\mathfrak{g}, V) }{ \mathrm{Im ~} \delta_\mathrm{cHom} : C^{n-1}_\mathrm{cHom} (\mathfrak{g}, V) \rightarrow  C^{n}_\mathrm{cHom} (\mathfrak{g}, V)
}, \text{ for } n \geq 0
\end{align*}
are called the cohomology of the compatible Hom-Lie algebra $(\mathfrak{g}, [~,~]_1, [~,~]_2, \alpha)$ with coefficients in the representation $(V, \bullet_1, \bullet_2, \beta)$.

It follows from the above definition that
\begin{align*}
H^0_\mathrm{cHom} (\mathfrak{g}, V) = \{ v \in V |~ \beta (v) = v \text{ and } x \bullet_1 v = x \bullet_2 v = 0, ~ \forall x \in \mathfrak{g} \}.
\end{align*}

A linear map $D : \mathfrak{g} \rightarrow V$ is said to be a derivation if $\beta \circ D = D \circ \alpha$ and
\begin{align*}
D [x,y]_1 = x \bullet_1 Dy - y \bullet_1 Dx, \qquad D[x,y]_2 = x \bullet_2 Dy - y \bullet_2 Dx, \text{ for } x, y \in \mathfrak{g}.
\end{align*}
We denote the space of derivations by $\mathrm{Der }(\mathfrak{g},V)$. A derivation $D$ is said to be inner if it is of the form $D = - \bullet_1 v = - \bullet_2 v$, for some $v \in C^0_\mathrm{cHom} (\mathfrak{g}, V)$. The space of inner derivations are denoted by $\mathrm{InnDer }(\mathfrak{g}, V)$. Then we have
\begin{align*}
H^1_\mathrm{cHom} (\mathfrak{g}, V) = \frac{ \mathrm{Der }(\mathfrak{g},V)  }{  \mathrm{InnDer }(\mathfrak{g},V) } = \mathrm{OutDer }(\mathfrak{g},V), \text{ the space of outer derivations.}
\end{align*}

\medskip

Let $\mathfrak{g} = (\mathfrak{g}, [~,~]_1, [~,~]_2, \alpha)$ be a compatible Hom-Lie algebra and $V = (V, \bullet_1, \bullet_2, \beta)$ be a representation of it. Then we know from Remark \ref{sum-rep} that $\mathfrak{g}_{+} = (\mathfrak{g}, [~,~]_1 + [~,~]_2 , \alpha)$ is a Hom-Lie algebra and $V_{+} = (V, \bullet_1 + \bullet_2 , \beta)$ is a representation of it. Consider the cochain complex $\{ C^\ast_\mathrm{cHom} (\mathfrak{g}, V), \delta_\mathrm{cHom} \}$ of the compatible Hom-Lie algebra $\mathfrak{g}$ with coefficients in the representation $V$, and the cochain complex $\{ C^\ast_\mathrm{Hom}(\mathfrak{g}_+, V_+), \delta_\mathrm{Hom} \}$ of the Hom-Lie algebra $\mathfrak{g}_+$ with coefficients in $V_+$.

For each $n \geq 0$, we define a map
\begin{align*}
\triangle_n : C^n_\mathrm{cHom} (\mathfrak{g}, V) \rightarrow C^n_\mathrm{Hom} (\mathfrak{g}_+, V_+) ~~~ \text{ by } ~~~ \begin{cases}  \triangle_0 (v) = \frac{1}{2} v, \\
\triangle_n ((f_1, \ldots, f_n )) = f_1 + \cdots + f_n, \end{cases}
\end{align*}
for $v \in C^0_\mathrm{cHom} (\mathfrak{g}, V)$ and $(f_1, \ldots, f_n) \in C^{n \geq 1}_\mathrm{cHom} (\mathfrak{g}, V)$. If $v \in C^0_\mathrm{cHom} (\mathfrak{g}, V)$, then 
\begin{align*}
(\delta_\mathrm{Hom} \circ \triangle_0 (v))(x) = \frac{1}{2} (\delta_\mathrm{Hom} (v))(x) = \frac{1}{2} (x \bullet_1 v + x \bullet_2 v) =~& x \bullet_1 v = x \bullet_2 v \\
=~& \delta_\mathrm{cHom}(v) (x) = (\triangle_1 \circ \delta_\mathrm{cHom} (v))(x).
\end{align*}
Moreover, if $(f_1, \ldots, f_n) \in C^{n \geq 1}_\mathrm{cHom}(\mathfrak{g}, V)$, then
\begin{align*}
(\delta_\mathrm{Hom} \circ \triangle_n) ((f_1, \ldots, f_n )) =~& \delta_\mathrm{Hom} (f_1 + \cdots + f_n) \\
=~& {}^1\delta_\mathrm{Hom} (f_1, \ldots, f_n ) + {}^2\delta_\mathrm{Hom} (f_1, \ldots, f_n ) \\
=~& \triangle_{n+1} \big(  {}^1\delta_\mathrm{Hom} f_1, \ldots, {}^1\delta_\mathrm{Hom} f_i + {}^2\delta_\mathrm{Hom} f_{i-1}, \ldots, {}^2\delta_\mathrm{Hom} f_n \big) \\
=~& (\triangle_{n+1} \circ \delta_\mathrm{cHom}) ((f_1, \ldots, f_n )).
\end{align*}
Therefore, we get the following.

\begin{thm}
The collection $\{ \triangle_n \}_{n \geq 0}$ defines a morphism of cochain complexes from $\{ C^\ast_\mathrm{cHom} (\mathfrak{g}, V), \delta_\mathrm{cHom} \}$ to $\{ C^\ast_\mathrm{Hom} (\mathfrak{g}_+, V_+), \delta_\mathrm{Hom} \}$. Hence, it induces a morphism $H^\ast_\mathrm{cHom}(\mathfrak{g},V) \rightarrow H^\ast_\mathrm{Hom}(\mathfrak{g}_+, V_+)$ between corresponding cohomologies.
\end{thm}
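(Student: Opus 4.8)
The plan is to verify directly that $\{\triangle_n\}_{n\geq 0}$ commutes with the coboundary operators, so that it is a morphism of cochain complexes; the induced map on cohomology is then automatic by standard homological algebra.

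First I would check that each $\triangle_n$ is well defined, i.e.\ that it actually takes values in $C^n_\mathrm{Hom}(\mathfrak{g}_+, V_+)$. For $n=0$ this is immediate: if $\beta(v)=v$ then $\beta(\frac{1}{2} v)=\frac{1}{2} v$. For $n\geq 1$, if each $f_i\in C^n_\mathrm{Hom}(\mathfrak{g},V)$ satisfies $\beta\circ f_i = f_i\circ\alpha^{\wedge n}$, then the same identity holds for $f_1+\cdots+f_n$, since the twisting map $\alpha$ (resp.\ $\beta$) is common to both Hom-Lie structures (resp.\ to both actions). Hence $\triangle_n$ is a well-defined linear map.

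The central observation is that the Chevalley-Eilenberg coboundary operator of the Hom-Lie algebra $\mathfrak{g}_+=(\mathfrak{g},[~,~]_1+[~,~]_2,\alpha)$ with coefficients in $V_+=(V,\bullet_1+\bullet_2,\beta)$ is, by the explicit formula for $\delta_\mathrm{Hom}$, linear in the bracket and in the action; since $\alpha$ enters that formula only through its powers, which are the same for $\mathfrak{g}$ and $\mathfrak{g}_+$, one obtains $\delta_\mathrm{Hom}(g) = {}^1\delta_\mathrm{Hom}(g) + {}^2\delta_\mathrm{Hom}(g)$ for every $g\in C^n_\mathrm{Hom}(\mathfrak{g},V)$, where on the left $\delta_\mathrm{Hom}$ denotes the coboundary of $\mathfrak{g}_+$. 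Using this identity together with the definition of $\delta_\mathrm{cHom}$, the two computations already displayed before the theorem statement establish $\delta_\mathrm{Hom}\circ\triangle_0 = \triangle_1\circ\delta_\mathrm{cHom}$ and $\delta_\mathrm{Hom}\circ\triangle_n = \triangle_{n+1}\circ\delta_\mathrm{cHom}$ for $n\geq 1$: in the latter case one expands $\delta_\mathrm{Hom}(f_1+\cdots+f_n)$ as $\sum_i({}^1\delta_\mathrm{Hom}f_i + {}^2\delta_\mathrm{Hom}f_i)$ and regroups the summands so as to recognise $\triangle_{n+1}$ applied to $\delta_\mathrm{cHom}(f_1,\ldots,f_n) = ({}^1\delta_\mathrm{Hom}f_1,\ldots,{}^1\delta_\mathrm{Hom}f_i+{}^2\delta_\mathrm{Hom}f_{i-1},\ldots,{}^2\delta_\mathrm{Hom}f_n)$.

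Finally, a morphism of cochain complexes sends cocycles to cocycles and coboundaries to coboundaries, hence descends to a well-defined homomorphism $H^n_\mathrm{cHom}(\mathfrak{g},V)\rightarrow H^n_\mathrm{Hom}(\mathfrak{g}_+,V_+)$ for every $n$; this requires no further work. The only mildly delicate points are the index bookkeeping in the regrouping step for $n\geq 1$ and the appearance of the factor $\frac{1}{2}$ in degree $0$ — which is forced precisely so that $(\delta_\mathrm{Hom}(\frac{1}{2}v))(x) = \frac{1}{2}(x\bullet_1 v + x\bullet_2 v) = x\bullet_1 v = x\bullet_2 v$ agrees with $\delta_\mathrm{cHom}(v)(x)$ — but neither is a genuine obstacle.
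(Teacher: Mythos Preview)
Your proposal is correct and follows exactly the same approach as the paper: the paper's proof consists precisely of the two computations displayed immediately before the theorem statement (which you reference), verifying $\delta_\mathrm{Hom}\circ\triangle_n = \triangle_{n+1}\circ\delta_\mathrm{cHom}$ via the decomposition $\delta_\mathrm{Hom} = {}^1\delta_\mathrm{Hom} + {}^2\delta_\mathrm{Hom}$. Your additional remarks on well-definedness of $\triangle_n$ and the role of the factor $\frac{1}{2}$ are helpful clarifications that the paper leaves implicit.
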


\medskip


\noindent {\bf Abelian extensions of compatible Hom-Lie algebras.} Here, we study abelian extensions of compatible Hom-Lie algebras and give a classification of equivalence classes of abelian extensions.

Let $\mathfrak{g} = (\mathfrak{g}, [~,~]_1, [~,~]_2, \alpha)$ be a compatible Hom-Lie algebra and $(V, \beta)$ be a pair of a vector space with a linear map. Note that $(V, \beta)$ can be considered as a compatible Hom-Lie algebra with both the Hom-Lie brackets on $V$ are trivial.

\begin{defn}
An abelian extension of $\mathfrak{g}$ by $V$ is an exact sequence of compatible Hom-Lie algebras
\begin{align}\label{ab-ext-ex}
\xymatrix{
0 \ar[r] &  (V, 0, 0, \beta) \ar[r]^{i} & (\mathfrak{h}, [~,~]_1^\mathfrak{h}, [~,~]_2^\mathfrak{h}, \alpha^\mathfrak{h}) \ar[r]^{j} & (\mathfrak{g}, [~,~]_1, [~,~]_2, \alpha) \ar[r] \ar@<+4pt>[l]^{s} & 0
}
\end{align}
together with a $\mathbb{K}$-splitting (given by $s$) satisfying
\begin{align}\label{alpha-comp}
\alpha^\mathfrak{h} \circ s = s \circ \alpha.
\end{align}
\end{defn}

We denote an abelian extension as above simply by $\mathfrak{h}$ when all the structures of the exact sequence (\ref{ab-ext-ex}) are understood.
Note that an abelian extension induces a representation of the compatible Hom-Lie algebra $(\mathfrak{g}, [~,~]_1, [~,~]_2, \alpha)$ on $(V, \beta)$ with the actions
\begin{align*}
x \bullet_1 v = [s(x), i(v)]_1^\mathfrak{h}   ~~~ \text{ and } ~~~
x \bullet_2 v = [s(x), i(v)]_2^\mathfrak{h}, \text{ for } x \in \mathfrak{g}, v \in V.
\end{align*}
It is easy to see that the above action is independent of the choice of $s$.

\begin{remark}
Let $(\mathfrak{h}, \alpha^\mathfrak{h})$ and $(\mathfrak{g}, \alpha)$ be two pairs of vector spaces endowed with linear maps. Suppose $j : \mathfrak{h} \rightarrow \mathfrak{g}$ is a linear map satisfying $\alpha \circ j = j \circ \alpha^\mathfrak{h}$. Then there might not be a linear map $s : \mathfrak{g} \rightarrow \mathfrak{h}$ that satisfies $\alpha^\mathfrak{h} \circ s = s \circ \alpha$. Take $\mathfrak{h} = \langle e_1, e_2 \rangle$ and $\alpha^\mathfrak{h} (e_1) = e_2$, $\alpha^\mathfrak{h}(e_2) = 0$. Also take $\mathfrak{g} = \langle f \rangle$ and $\alpha (f ) = 0$. Let $j : \mathfrak{h} \rightarrow \mathfrak{g}$ be the map defined by $j (e_1) = f$ and $j (e_2) = 0$. Let $s : \mathfrak{g} \rightarrow \mathfrak{h}$ be a map satisfying $\alpha^\mathfrak{h} \circ s = s \circ \alpha$. For $s (f) = \lambda e_1 + \eta e_2$, we have $f = (j \circ s) (f) = \lambda f$ which implies that $\lambda = 1$. Thus,
\begin{align*}
0 = (s \circ \alpha) (f) = (\alpha^\mathfrak{h} \circ s) (f) = \alpha^\mathfrak{h} ( e_1 + \eta e_2) = e_2
\end{align*}
leads to a contradiction.
\end{remark}

\begin{defn}
Two abelian extensions $\mathfrak{h}$ and $\mathfrak{h}'$ are said to be equivalent if there is a compatible Hom-Lie algebra homomorphism $\phi : \mathfrak{h} \rightarrow \mathfrak{h}'$ making the following diagram commutative
\[
\xymatrix{
0 \ar[r] &  (V, 0, 0, \beta)  \ar[r]^{i} \ar@{=}[d] & (\mathfrak{h}, [~,~]_1^\mathfrak{h}, [~,~]_2^\mathfrak{h}, \alpha^\mathfrak{h}) \ar[d]^{\phi} \ar[r]^{j} & (\mathfrak{g}, [~,~]_1, [~,~]_2, \alpha) \ar[r]  \ar@{=}[d] \ar@<+4pt>[l]^{s} & 0 \\
0 \ar[r] &  (V, 0, 0, \beta) \ar[r]_{i'} & (\mathfrak{h}', [~,~]_1^{\mathfrak{h}'}, [~,~]_2^{\mathfrak{h}'}, \alpha^{\mathfrak{h}'}) \ar[r]^{j'} & (\mathfrak{g}, [~,~]_1, [~,~]_2, \alpha) \ar[r] \ar@<+4pt>[l]^{s'} & 0.
}
\]
\end{defn}

Let $(\mathfrak{g}, [~,~]_1, [~,~]_2, \alpha)$ be a compatible Hom-Lie algebra and $(V, \bullet_1, \bullet_2, \beta)$ be a representation of it. We denote $\mathrm{Ext}(\mathfrak{g}, V)$ by the set of equivalence classes of abelian extensions of $\mathfrak{g}$ by $V$ for which the induced representation on $V$ is the given one.

With the above notation, we have the following.

\begin{thm}
There is a bijection $H^2_{\mathrm{cHom}} (\mathfrak{g}, V) \cong \mathrm{Ext}(\mathfrak{g}, V).$
\end{thm}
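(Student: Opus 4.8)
The plan is to produce a pair of mutually inverse bijections between $\mathrm{Ext}(\mathfrak{g}, V)$ and $H^2_{\mathrm{cHom}}(\mathfrak{g}, V)$ in the standard style of extension theory, doing the bookkeeping one bracket at a time and then reconciling the two halves via the compatibility conditions.

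\emph{From a cocycle to an extension.} Given $(f_1, f_2) \in Z^2_{\mathrm{cHom}}(\mathfrak{g}, V)$, I would equip $\mathfrak{h} := \mathfrak{g} \oplus V$ with the linear map $\alpha \oplus \beta$ and, for $i = 1, 2$, the bracket $[(x,u),(y,v)]_i^{\mathfrak{h}} := ([x,y]_i,\ x \bullet_i v - y \bullet_i u + f_i(x,y))$. The condition $\beta \circ f_i = f_i \circ \alpha^{\wedge 2}$ (built into the definition of $C^2_{\mathrm{Hom}}(\mathfrak{g},V)$) yields multiplicativity of $\alpha \oplus \beta$ for $[~,~]_i^{\mathfrak{h}}$; the cocycle identity ${}^i\delta_{\mathrm{Hom}} f_i = 0$ yields the Hom-Jacobi identity for $[~,~]_i^{\mathfrak{h}}$; and the mixed identity ${}^1\delta_{\mathrm{Hom}} f_2 + {}^2\delta_{\mathrm{Hom}} f_1 = 0$ yields the compatibility condition (\ref{comp-cond-e}) for the pair. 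This is the evident $f$-twisted analogue of the semidirect product of Proposition \ref{prop-comp-semi}; together with the inclusion $i(v) = (0,v)$, the projection $j(x,v) = x$ and the section $s(x) = (x,0)$ (which satisfies $(\alpha\oplus\beta)\circ s = s\circ\alpha$) it is an abelian extension of $\mathfrak{g}$ by $V$ whose induced representation is exactly $(V, \bullet_1, \bullet_2, \beta)$.

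\emph{From an extension to a cocycle.} Conversely, given an abelian extension $\mathfrak{h}$ with section $s$ obeying $\alpha^{\mathfrak{h}} \circ s = s \circ \alpha$, I would define $f_i : \wedge^2 \mathfrak{g} \to V$ by $i(f_i(x,y)) := [s(x),s(y)]_i^{\mathfrak{h}} - s([x,y]_i)$; this lands in $i(V)$ because $j$ annihilates the right-hand side, and $\beta \circ f_i = f_i \circ \alpha^{\wedge 2}$ follows from $\alpha^{\mathfrak{h}} \circ s = s \circ \alpha$ together with multiplicativity of $\alpha^{\mathfrak{h}}$. Expanding the Hom-Jacobi identity for $[~,~]_i^{\mathfrak{h}}$ on $s(x), s(y), s(z)$ and using that $V$ is an abelian ideal carrying the induced action $x \bullet_i v = [s(x), i(v)]_i^{\mathfrak{h}}$ gives ${}^i\delta_{\mathrm{Hom}} f_i = 0$; the six-term compatibility identity (\ref{comp-cond-e}) for $([~,~]_1^{\mathfrak{h}}, [~,~]_2^{\mathfrak{h}})$ gives ${}^1\delta_{\mathrm{Hom}} f_2 + {}^2\delta_{\mathrm{Hom}} f_1 = 0$. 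Hence $(f_1, f_2) \in Z^2_{\mathrm{cHom}}(\mathfrak{g}, V)$. Replacing $s$ by $s' = s + i \circ h$ for a linear $h : \mathfrak{g} \to V$ (necessarily with $\beta \circ h = h \circ \alpha$, i.e. $h \in C^1_{\mathrm{cHom}}(\mathfrak{g},V)$) replaces $(f_1,f_2)$ by $(f_1,f_2) + \delta_{\mathrm{cHom}}(h) = (f_1 + {}^1\delta_{\mathrm{Hom}} h,\ f_2 + {}^2\delta_{\mathrm{Hom}} h)$, so the class is independent of $s$. For an equivalence $\phi : \mathfrak{h} \to \mathfrak{h}'$, the maps $\phi \circ s$ and $s'$ are both valid sections of $j'$ and differ by $i' \circ h$ for a unique $h \in C^1_{\mathrm{cHom}}(\mathfrak{g},V)$, and $\phi$ being the identity on $V$ forces the cocycle of $(\mathfrak{h}',\phi\circ s)$ to equal that of $(\mathfrak{h},s)$; hence equivalent extensions have cocycles differing by $\delta_{\mathrm{cHom}}(h)$, giving a well-defined map $\mathrm{Ext}(\mathfrak{g}, V) \to H^2_{\mathrm{cHom}}(\mathfrak{g}, V)$.

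\emph{Mutual inverseness and the main difficulty.} The extension built from $(f_1, f_2)$ with section $x \mapsto (x,0)$ visibly has associated cocycle $(f_1, f_2)$; and for a general extension $\mathfrak{h}$ with section $s$, the map $\mathfrak{h} \to \mathfrak{g} \oplus V$, $a \mapsto (j(a),\, i^{-1}(a - s(j(a))))$ (well defined since $a - s(j(a)) \in \ker j = i(V)$) is an isomorphism of compatible Hom-Lie algebras realizing an equivalence with the extension reconstructed from its cocycle. In the other direction, cohomologous cocycles $(f_1', f_2') = (f_1, f_2) + \delta_{\mathrm{cHom}}(h)$ yield equivalent extensions via $\phi(x,v) = (x, v + h(x))$, which one checks is a compatible Hom-Lie algebra homomorphism fitting into the equivalence diagram. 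The main obstacle is the translation carried out in the second and first steps: verifying that the two Hom-Jacobi identities and, above all, the six-term compatibility condition (\ref{comp-cond-e}) for the brackets on $\mathfrak{h}$ correspond \emph{precisely} to ${}^i\delta_{\mathrm{Hom}} f_i = 0$ and ${}^1\delta_{\mathrm{Hom}} f_2 + {}^2\delta_{\mathrm{Hom}} f_1 = 0$. This requires expanding all the $\bullet_i$-terms and $\alpha$-twists against the explicit formulas for ${}^1\delta_{\mathrm{Hom}}$ and ${}^2\delta_{\mathrm{Hom}}$, matching the six summands of the compatibility relation with the six summands of the mixed coboundary, and using $\beta$-equivariance throughout to keep every cochain inside the relevant cochain group.
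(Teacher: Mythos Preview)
Your proposal is correct and follows essentially the same approach as the paper: build the twisted semidirect product from a $2$-cocycle, extract the cocycle from an extension via the section, and check that equivalence of extensions corresponds to cohomologous cocycles. The paper's own proof is considerably more terse---it identifies $\mathfrak{h}$ with $\mathfrak{g}\oplus V$ via $s$ at the outset and leaves the verification that equivalent extensions yield cohomologous cocycles to the reader---so your version is in fact a more complete write-up of the same argument, including the explicit treatment of section-independence and the isomorphism $a\mapsto (j(a), i^{-1}(a-s(j(a))))$ realizing mutual inverseness.
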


\begin{proof}
 Let $(f_1, f_2) \in Z^2_\mathrm{cHom} (\mathfrak{g}, V)$ be any $2$-cocycle. Then it induces a compatible Hom-Lie algebra structure on $\mathfrak{h} = \mathfrak{g} \oplus V$ with structure maps
 \begin{align*}
 [(x,u), (y, v)]_i^\ltimes :=~& ([x,y], x \bullet_i v -  y \bullet_i u + f_i (x, y)), ~ \text{ for } i = 1,2,\\
 \alpha^\mathfrak{h} (x, u) :=~& (\alpha (x), \beta (u)), ~ \text{ for } (x, u), (y, v) \in \mathfrak{h}.
 \end{align*}
 Moreover, this compatible Hom-Lie algebra makes
\begin{align*}
\xymatrix{
0 \ar[r] &  (V, 0, 0, \beta) \ar[r]^{i} & (\mathfrak{h}, [~,~]_1^\mathfrak{h}, [~,~]_2^\mathfrak{h}, \alpha^\mathfrak{h}) \ar[r]^{j} & (\mathfrak{g}, [~,~]_1, [~,~]_2, \alpha) \ar[r] \ar@<+4pt>[l]^{s} & 0
}
\end{align*} 
into an abelian extension with the obvious splitting $s$. It is easy to see that any other cohomologous $2$-cocycle gives rise to an equivalent abelian extension. Hence the map $H^2_\mathrm{cHom} (\mathfrak{g}, V) \rightarrow \mathrm{Ext}(\mathfrak{g}, V)$ is well defined.

 \medskip
 
 Conversely, let (\ref{ab-ext-ex}) be an abelian extension with splitting $s$. Then we may consider $\mathfrak{h} = \mathfrak{g} \oplus V$ and $s$ is the map $s(x) = (x, 0)$, for $x \in \mathfrak{g}.$ The map $i$ and $j$ are the obvious ones. Moreover, it follows from (\ref{alpha-comp}) that $\alpha^\mathfrak{h} = (\alpha, \beta)$. Finally, since $j$ is a compatible Hom-Lie algebra map, we have
 \begin{align*}
 j [(x,0), (y,0)]_i^\mathfrak{h} = [x, y]_i,~ \text{ for } x, y \in \mathfrak{g}.
\end{align*} 
This implies that  $[(x,0), (y,0)]_i^\mathfrak{h} = ([x,y]_i, f_i (x,y)),$ for $i=1,2$ and some $f_1, f_2 \in C^2_\mathrm{Hom}(\mathfrak{g}, V)$. As $([~,~]_1^\mathfrak{h}, [~,~]_2^\mathfrak{h})$ is a compatible Hom-Lie algebra structure on $\mathfrak{h}$, we get that the pair $(f_1, f_2) \in Z^2_\mathrm{cHom}(\mathfrak{g}, V)$ is a $2$-cocycle. It is left to the reader to check that equivalent abelian extensions give rise to cohomologous $2$-cocycles. Hence the map $\mathrm{Ext}(\mathfrak{g}, V) \rightarrow H^2_\mathrm{cHom} (\mathfrak{g}, V)$ is also well defined. Finally, these two maps are inverses to each other. Hence the proof. 
\end{proof}

\section{Deformations of compatible Hom-Lie algebras}\label{sec-5}
In this section, we study linear deformations and finite order deformations of a compatible Hom-Lie algebra generalizing the classical deformation theory of Gerstenhaber \cite{gers}. We introduce Nijenhuis operators that generate trivial linear deformations. We also define infinitesimal deformations of a compatible Hom-Lie algebra and characterize equivalence classes of infinitesimal deformations in terms of cohomology. Finally, we consider extensibility of finite order deformations.

Let $(\mathfrak{g}, [~,~]_1, [~,~]_2, \alpha)$ be a compatible Hom-Lie algebra and $(\omega_1, \omega_2) \in C^2_\mathrm{cHom} (\mathfrak{g}, \mathfrak{g})$ be a $2$-cochain. Define two bilinear operations on $\mathfrak{g}$ depending on the parameter $t$ as follows:
\begin{align*}
[x,y]_1^t := [x,y]_1 + t \omega_1 (x,y)  ~~ \text{ and } ~~
[x,y]_2^t := [x,y]_2 + t \omega_2 (x,y), \text{ for } x, y \in \mathfrak{g}.
\end{align*}

\begin{defn}
We say that $(\omega_1, \omega_2)$ generates a linear deformation of the compatible Hom-Lie algebra $(\mathfrak{g}, [~,~]_1, [~,~]_2, \alpha)$ if for all $t$, the quadruple $(\mathfrak{g}, [~,~]_1^t, [~,~]_2^t, \alpha)$ is a compatible Hom-Lie algebra.
\end{defn}

If $\mu_1, \mu_2 \in C^2_\mathrm{Hom}(\mathfrak{g}, \mathfrak{g})$ denote the elements corresponding to the Hom-Lie brackets $[~,~]_1$ and $[~,~]_2$, respectively, then the above definition is equivalent to saying that
\begin{align*}
[\mu_1 + t\omega_1, \mu_1 + t\omega_1]_\mathsf{NR} = 0, \qquad [\mu_2 + t \omega_2, \mu_2 + t \omega_2]_\mathsf{NR} = 0 ~~ \text{ and } ~~ [\mu_1 + t \omega_1, \mu_2 + t \omega_2]_\mathsf{NR} = 0.
\end{align*}
In other words, the followings are hold
\begin{align*}
[\mu_1, \omega_1]_\mathsf{NR} = 0,  \qquad    [\mu_2, \omega_2]_\mathsf{NR} = 0,  \qquad [\mu_1 , \omega_2]_\mathsf{NR} + [\mu_2, \omega_1]_\mathsf{NR} = 0, \\
[\omega_1, \omega_1]_\mathsf{NR} = 0,  \qquad    [\omega_2, \omega_2]_\mathsf{NR} = 0,  \qquad [\omega_1 , \omega_2]_\mathsf{NR} = 0.
\end{align*}
The first three condition implies that $(\omega_1, \omega_2) \in C^2_{\mathrm{cHom}} (\mathfrak{g}, \mathfrak{g})$ is a $2$-cocycle in the cohomology of the compatible Hom-Lie algebra $\mathfrak{g}$ with coefficients in the adjoint representation. Moreover, the last three conditions implies that $(\mathfrak{g}, \omega_1, \omega_2, \alpha)$ is a compatible Hom-Lie algebra.

\begin{defn}
Let $(\omega_1, \omega_2)$ and $(\omega_1', \omega_2')$ generate linear deformations $(\mathfrak{g}, [~,~]_1^t, [~,~]_2^t , \alpha)$ and $(\mathfrak{g}, [~,~]_1^{'t}, [~,~]_2^{'t} , \alpha)$ of a compatible Hom-Lie algebra $\mathfrak{g}.$ They are said to be equivalent if there exists a linear map $N: \mathfrak{g} \rightarrow \mathfrak{g}$ satisfying $\alpha \circ N = N \circ \alpha$ and such that 
\begin{align*}
\mathrm{id} + t N : (\mathfrak{g}, [~,~]_1^{t}, [~,~]_2^{t} , \alpha) \rightarrow (\mathfrak{g}, [~,~]_1^{'t}, [~,~]_2^{'t} , \alpha)
\end{align*}
is a morphism of compatible Hom-Lie algebras.
\end{defn}

One can equivalently write the explicit identities as follows:
\begin{align*}
\omega_i (x,y) - \omega_i' (x,y) =~& [x, Ny]_i + [Nx, y]_i - N[x,y]_i,\\
N\omega_i (x,y) =~& \omega_i' (x, Ny) + \omega_i' (Nx, y) + [Nx, Ny]_i,\\
\omega_i' (Nx, Ny)=~& 0,
\end{align*}
for $x, y \in \mathfrak{g}$ and $i=1,2$. Note that from the first identity, we get
\begin{align*}
(\omega_1, \omega_2) - (\omega_1', \omega_2') = \delta_\mathrm{cHom} N,
\end{align*}
where $N$ is considered as an element in $C^1_\mathrm{cHom} (\mathfrak{g}, \mathfrak{g})$. Hence, summarizing the above discussions, we get the following.

\begin{thm}\label{lin-def-thm}
Let $\mathfrak{g}$ be a compatible Hom-Lie algebra. Then there is a map from the set of equivalence classes of linear deformations of $\mathfrak{g}$ to the second cohomology group $H^2_\mathrm{cHom}(\mathfrak{g}, \mathfrak{g})$.
\end{thm}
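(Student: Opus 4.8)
The plan is to extract the theorem directly from the computations already carried out in the paragraphs preceding the statement, packaging them into a well-defined map on equivalence classes. First I would recall that if $(\omega_1,\omega_2)$ generates a linear deformation of $\mathfrak{g}$, then the first three Maurer–Cartan-type identities
\[
[\mu_1,\omega_1]_\mathsf{NR} = 0,\qquad [\mu_2,\omega_2]_\mathsf{NR} = 0,\qquad [\mu_1,\omega_2]_\mathsf{NR} + [\mu_2,\omega_1]_\mathsf{NR} = 0
\]
are exactly the statement that $\delta_\mathrm{cHom}(\omega_1,\omega_2) = 0$ in the cochain complex $\{C^\ast_\mathrm{cHom}(\mathfrak{g},\mathfrak{g}),\delta_\mathrm{cHom}\}$ with coefficients in the adjoint representation; here one uses the identification of $\delta_\mathrm{Hom}$ with $\pm[\mu_i,-]_\mathsf{NR}$ on $C^\ast_\mathrm{Hom}(\mathfrak{g},\mathfrak{g})$ recalled in Section \ref{sec-2}, together with the definition of $\delta_\mathrm{cHom}$ in Section \ref{sec-4}. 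Thus every linear deformation determines a class $[(\omega_1,\omega_2)] \in H^2_\mathrm{cHom}(\mathfrak{g},\mathfrak{g})$.

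Next I would check that this assignment descends to equivalence classes. If $(\omega_1,\omega_2)$ and $(\omega_1',\omega_2')$ generate equivalent linear deformations via $\mathrm{id} + tN$, then the first of the three explicit identities displayed before the theorem reads $\omega_i(x,y) - \omega_i'(x,y) = [x,Ny]_i + [Nx,y]_i - N[x,y]_i$ for $i=1,2$. Viewing $N$ as an element of $C^1_\mathrm{cHom}(\mathfrak{g},\mathfrak{g}) = C^1_\mathrm{Hom}(\mathfrak{g},\mathfrak{g})$, the right-hand side is precisely $({}^1\delta_\mathrm{Hom} N, {}^2\delta_\mathrm{Hom} N) = \delta_\mathrm{cHom} N$ (the sign conventions match since $\delta_\mathrm{Hom}$ on $1$-cochains is $+[\mu_i,-]_\mathsf{NR}$). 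Hence $(\omega_1,\omega_2) - (\omega_1',\omega_2') = \delta_\mathrm{cHom}N$ is a coboundary, so the two cocycles represent the same cohomology class. Defining the map by $[(\mathfrak{g},[~,~]_1^t,[~,~]_2^t,\alpha)] \mapsto [(\omega_1,\omega_2)]$ therefore yields a well-defined function from the set of equivalence classes of linear deformations to $H^2_\mathrm{cHom}(\mathfrak{g},\mathfrak{g})$.

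There is essentially no hard analytic step here; the proof is a bookkeeping argument, and the main point requiring care is verifying that the sign conventions in the definitions of ${}^1\delta_\mathrm{Hom}, {}^2\delta_\mathrm{Hom}$ and $\delta_\mathrm{cHom}$ are consistent so that the cocycle and coboundary conditions come out exactly as the Nijenhuis–Richardson bracket identities listed above — in particular that on $C^1$ and $C^2$ the factor $(-1)^{n-1}$ does not spoil the matching. I would also note for completeness (though it is not needed for the stated map) that the remaining three identities $[\omega_1,\omega_1]_\mathsf{NR} = [\omega_2,\omega_2]_\mathsf{NR} = [\omega_1,\omega_2]_\mathsf{NR} = 0$ say that $(\mathfrak{g},\omega_1,\omega_2,\alpha)$ is itself a compatible Hom-Lie algebra, which is the feature distinguishing linear deformations from general infinitesimal ones, but plays no role in constructing the cohomology class. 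Assembling these observations gives the theorem.
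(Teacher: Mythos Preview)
Your proposal is correct and follows essentially the same approach as the paper: the paper's proof is precisely the discussion preceding the theorem, observing that the three identities $[\mu_1,\omega_1]_\mathsf{NR}=[\mu_2,\omega_2]_\mathsf{NR}=[\mu_1,\omega_2]_\mathsf{NR}+[\mu_2,\omega_1]_\mathsf{NR}=0$ give the $2$-cocycle condition, and that the first equivalence identity yields $(\omega_1,\omega_2)-(\omega_1',\omega_2')=\delta_\mathrm{cHom}N$. Your additional remarks on sign conventions and on the role of the remaining three identities are fine supplementary commentary but not part of the paper's argument.
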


\medskip

Next, we introduce trivial linear deformations of a compatible Hom-Lie algebra and introduce Nijenhuis operators that generate trivial linear deformations.

\begin{defn}
A linear deformation $([~,~]_1 + t \omega_1, [~,~]_2 + t \omega_2)$ of compatible Hom-Lie algebra $(\mathfrak{g}, [~,~]_1, [~,~]_2, \alpha)$ is said to be trivial if the deformation is equivalent to the undeformed one $([~,~]_1, [~,~]_2).$
\end{defn}

Thus, a linear deformation $([~,~]_1 + t \omega_1, [~,~]_2 + t \omega_2)$ is trivial if and only if there exists a linear map $N: \mathfrak{g} \rightarrow \mathfrak{g}$ satisfying $\alpha \circ N = N \circ \alpha$ and
\begin{align*}
\omega_i (x,y) =~& [x, Ny]_i + [Nx, y]_i - N[x,y]_i,\\
N\omega_i (x,y) =~& [Nx, Ny]_i,~ \text{ for } i=1,2 \text{ and } x,y \in \mathfrak{g}.
\end{align*}

This motivates us to introduce Nijenhuis operators on a compatible Hom-Lie algebra.

\begin{defn}
A Nijenhuis operator on a compatible Hom-Lie algebra $(\mathfrak{g}, [~,~]_1, [~,~]_2, \alpha)$ is a linear map $N:\mathfrak{g} \rightarrow \mathfrak{g}$ which is a Nijenhuis operator for both the Hom-Lie algebras $(\mathfrak{g},[~,~]_1, \alpha)$ and $(\mathfrak{g}, [~,~]_2, \alpha)$, i.e., $N$ satisfies $\alpha \circ N = N \circ \alpha$ and 
\begin{align*}
[Nx, Ny]_i = N ([Nx, y]_i + [ x, Ny]_i - N [x, y]_i),~ \text{ for } i=1,2 \text{ and } x, y \in \mathfrak{g}.
\end{align*}
\end{defn}

It follows that any trivial linear deformation of a compatible Hom-Lie algebra induces a Nijenhuis operator. The converse is given by the next result whose proof is straightforward.

\begin{prop}
Let $N : \mathfrak{g} \rightarrow \mathfrak{g}$ be a Nijenhuis operator on a compatible Hom-Lie algebra $(\mathfrak{g},[~,~]_1, [~,~]_2, \alpha)$. Then $(\omega_1, \omega_2)$ generates a trivial linear deformation of the compatible Hom-Lie algebra $(\mathfrak{g},[~,~]_1, [~,~]_2, \alpha)$, where
\begin{align*}
\omega_i (x, y) = [Nx, y]_i + [x, Ny]_i - N [x, y]_i,~ \text{ for } i=1,2 \text{ and } x, y \in \mathfrak{g}. 
\end{align*}
\end{prop}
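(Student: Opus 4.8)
The plan is to prove the two assertions hidden in the statement in turn: first, that $(\omega_1,\omega_2)$ really generates a linear deformation of $(\mathfrak{g},[~,~]_1,[~,~]_2,\alpha)$, and second, that this deformation is trivial in the sense recorded just before the proposition.

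For the first assertion, I would begin with the observation that for every $\lambda,\eta\in\mathbb{K}$ the map $N$ is a Nijenhuis operator on the Hom-Lie algebra $(\mathfrak{g},\,\lambda[~,~]_1+\eta[~,~]_2,\,\alpha)$. This Hom-Lie algebra exists by the very definition of a compatible Hom-Lie algebra; we have $\alpha\circ N=N\circ\alpha$; and the Nijenhuis identity for the bracket $\lambda[~,~]_1+\eta[~,~]_2$ is simply the combination, with coefficients $\lambda$ and $\eta$, of the two Nijenhuis identities that $N$ satisfies for $[~,~]_1$ and for $[~,~]_2$. Hence, by the Example on Nijenhuis operators on a single Hom-Lie algebra recalled in Section~\ref{sec-3}, the quadruple $\big(\mathfrak{g},\,\lambda[~,~]_1+\eta[~,~]_2,\,(\lambda[~,~]_1+\eta[~,~]_2)_N,\,\alpha\big)$ is a compatible Hom-Lie algebra, and unwinding the definition of the Nijenhuis-deformed bracket gives $(\lambda[~,~]_1+\eta[~,~]_2)_N=\lambda\omega_1+\eta\omega_2$.

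Now fix $t\in\mathbb{K}$ and arbitrary $a,b\in\mathbb{K}$. Then $a[~,~]_1^t+b[~,~]_2^t=(a[~,~]_1+b[~,~]_2)+t\,(a\omega_1+b\omega_2)$, which, by the previous step applied with $\lambda=a$ and $\eta=b$, is a linear combination of the two Hom-Lie brackets constituting the compatible Hom-Lie algebra $\big(\mathfrak{g},\,a[~,~]_1+b[~,~]_2,\,a\omega_1+b\omega_2,\,\alpha\big)$; hence, by compatibility, a Hom-Lie bracket. As $a$ and $b$ are arbitrary, this shows that $(\mathfrak{g},[~,~]_1^t,[~,~]_2^t,\alpha)$ is a compatible Hom-Lie algebra for every $t$, i.e., $(\omega_1,\omega_2)$ generates a linear deformation.

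Finally, I would check the triviality criterion stated just before the proposition, taking the required auxiliary linear map to be $N$ itself. The identity $\omega_i(x,y)=[x,Ny]_i+[Nx,y]_i-N[x,y]_i$ is the definition of $\omega_i$, and $\alpha\circ N=N\circ\alpha$ is a hypothesis; the remaining identity $N\omega_i(x,y)=[Nx,Ny]_i$ is just a rewriting of the Nijenhuis identity of $N$ on $(\mathfrak{g},[~,~]_i,\alpha)$, namely $[Nx,Ny]_i=N\big([Nx,y]_i+[x,Ny]_i-N[x,y]_i\big)=N\omega_i(x,y)$, for $i=1,2$. This completes the proof. The one point deserving care is, in the first assertion, the \emph{compatibility} of the deformed pair of brackets (as opposed to each $[~,~]_i^t$ being a Hom-Lie bracket on its own); the scaling observation above is exactly what lets us deduce it from the single-Hom-Lie-algebra Example, avoiding any direct computation with the Nijenhuis-Richardson brackets $[\mu_i,\omega_j]_\mathsf{NR}$.
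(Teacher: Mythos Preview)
Your proof is correct. The paper itself omits the argument entirely, declaring it ``straightforward'', so there is no explicit proof to compare against. A direct verification in the spirit of the surrounding text would presumably check the six Nijenhuis--Richardson identities listed after Definition~5.1 (in particular the cross terms $[\mu_1,\omega_2]_\mathsf{NR}+[\mu_2,\omega_1]_\mathsf{NR}=0$ and $[\omega_1,\omega_2]_\mathsf{NR}=0$) by hand. Your scaling argument avoids this: by observing that $N$ is Nijenhuis for every combination $a[~,~]_1+b[~,~]_2$ and then invoking the single-algebra Example from Section~\ref{sec-3}, you obtain compatibility of $([~,~]_1^t,[~,~]_2^t)$ for free, without touching the Nijenhuis--Richardson bracket. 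This is a genuinely cleaner route to the compatibility half of the statement; the triviality half is, as you note, immediate from the definitions.
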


In the following, we introduce infinitesimal deformations of a compatible Hom-Lie algebra as a generalization of linear deformations.

\begin{defn}
An infinitesimal deformation of a compatible Hom-Lie algebra $(\mathfrak{g},[~,~]_1, [~,~]_2, \alpha)$ is a linear deformation over $\mathbb{K}[[t]]/ (t^2)$.
\end{defn}

One can similarly define equivalences between two infinitesimal deformations. It is easy to see that any $2$-cocycle $(\omega_1, \omega_2) \in Z^2_\mathrm{cHom}(\mathfrak{g}, \mathfrak{g})$ induces an infinitesimal deformation and cohomologous $2$-cocycles give rise to equivalent infinitesimal deformations. Summarizing this fact with Theorem \ref{lin-def-thm}, we get the following.

\begin{thm}
Let $(\mathfrak{g},[~,~]_1, [~,~]_2, \alpha)$ be a compatible Hom-Lie algebra. Then the equivalence classes of infinitesimal deformations are in one-to-one correspondence with $H^2_\mathrm{cHom}(\mathfrak{g}, \mathfrak{g}).$
\end{thm}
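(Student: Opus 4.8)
The statement collects together two facts: that infinitesimal deformations of $\mathfrak{g}$ are classified (as a set) by the $2$-cocycles $Z^2_\mathrm{cHom}(\mathfrak{g},\mathfrak{g})$, and that equivalence of such deformations corresponds exactly to the $2$-coboundaries. The plan is therefore to unwind the two definitions modulo $t^2$, match the surviving identities with the cocycle and coboundary conditions for $\delta_\mathrm{cHom}$, and then assemble the bijection (which is essentially the restriction to second order of the map in Theorem \ref{lin-def-thm}).

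First I would pin down infinitesimal deformations. Writing $\mu_1,\mu_2\in C^2_\mathrm{Hom}(\mathfrak{g},\mathfrak{g})$ for the elements corresponding to $[~,~]_1,[~,~]_2$, a linear deformation over $\mathbb{K}[[t]]/(t^2)$ is a pair $(\omega_1,\omega_2)\in C^2_\mathrm{cHom}(\mathfrak{g},\mathfrak{g})$ for which $[\mu_i+t\omega_i,\mu_i+t\omega_i]_\mathsf{NR}=0$ $(i=1,2)$ and $[\mu_1+t\omega_1,\mu_2+t\omega_2]_\mathsf{NR}=0$ hold in $\mathbb{K}[[t]]/(t^2)$. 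Expanding and discarding the $t^2$-terms, the surviving conditions are exactly $[\mu_i,\omega_i]_\mathsf{NR}=0$ for $i=1,2$ together with $[\mu_1,\omega_2]_\mathsf{NR}+[\mu_2,\omega_1]_\mathsf{NR}=0$; as noted in the discussion before Theorem \ref{lin-def-thm}, these three identities are precisely the statement $\delta_\mathrm{cHom}(\omega_1,\omega_2)=0$. Hence infinitesimal deformations are in bijection with $Z^2_\mathrm{cHom}(\mathfrak{g},\mathfrak{g})$. In contrast with honest linear deformations over $\mathbb{K}[t]$, the extra constraints $[\omega_i,\omega_j]_\mathsf{NR}=0$ do not appear here, since they sit in order $t^2$.

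Next I would treat equivalence. By definition, $(\omega_1,\omega_2)$ and $(\omega_1',\omega_2')$ are equivalent iff there is a linear $N:\mathfrak{g}\to\mathfrak{g}$ with $\alpha\circ N=N\circ\alpha$ --- i.e. $N\in C^1_\mathrm{Hom}(\mathfrak{g},\mathfrak{g})=C^1_\mathrm{cHom}(\mathfrak{g},\mathfrak{g})$ --- such that $\mathrm{id}+tN$ is a morphism of compatible Hom-Lie algebras over $\mathbb{K}[[t]]/(t^2)$; note $\mathrm{id}+tN$ is automatically invertible there with inverse $\mathrm{id}-tN$, so this is a genuine equivalence relation. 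Writing out $(\mathrm{id}+tN)[x,y]_i^t=[(\mathrm{id}+tN)x,(\mathrm{id}+tN)y]_i^{\prime t}$ and comparing the coefficients of $t$ (the $t^2$-coefficient being vacuous here), one gets $\omega_i(x,y)-\omega_i'(x,y)=[Nx,y]_i+[x,Ny]_i-N[x,y]_i$ for $i=1,2$, that is, $(\omega_1,\omega_2)-(\omega_1',\omega_2')=\delta_\mathrm{cHom}N\in B^2_\mathrm{cHom}(\mathfrak{g},\mathfrak{g})$, using $\delta_\mathrm{cHom}N=({}^1\delta_\mathrm{Hom}N,{}^2\delta_\mathrm{Hom}N)$. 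Conversely, any coboundary $\delta_\mathrm{cHom}N$ is realized by this $\mathrm{id}+tN$. So two infinitesimal deformations are equivalent precisely when their cocycles are cohomologous.

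Finally, combining the two steps, the assignment $(\omega_1,\omega_2)\mapsto([~,~]_1+t\omega_1,[~,~]_2+t\omega_2)$ induces a well-defined bijection from $H^2_\mathrm{cHom}(\mathfrak{g},\mathfrak{g})$ onto the set of equivalence classes of infinitesimal deformations, which is also compatible with Theorem \ref{lin-def-thm}. I expect no essential obstacle: the argument is entirely a matter of carefully expanding the defining identities modulo $t^2$ and reading off the first-order coefficients. The points requiring the most care are verifying that nothing beyond the cocycle condition survives modulo $t^2$ (so that the map onto $Z^2_\mathrm{cHom}$ is really surjective), that $\mathrm{id}+tN$ is an invertible morphism, and that the constructions descend to equivalence and cohomology classes.
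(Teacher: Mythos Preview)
Your proposal is correct and follows essentially the same approach as the paper. The paper's argument is a one-line sketch: it notes that every $2$-cocycle yields an infinitesimal deformation and that cohomologous cocycles give equivalent ones, and then invokes Theorem~\ref{lin-def-thm} for the reverse direction; your write-up simply unpacks these claims in detail, in particular making explicit that the quadratic constraints $[\omega_i,\omega_j]_\mathsf{NR}=0$ disappear modulo $t^2$ and that $\mathrm{id}+tN$ is invertible over $\mathbb{K}[[t]]/(t^2)$.
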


\medskip

\noindent {\bf Finite order deformations.} Next, we consider finite order deformations of a compatible Hom-Lie algebra and study their extensions. Let $(\mathfrak{g}, [~,~]_1, [~,~]_2, \alpha)$ be a compatible Hom-Lie algebra. For any natural number $p \in \mathbb{N}$, consider the ring $\mathbb{K}[[t]]/(t^{p+1})$. Then $\mathfrak{g}[[t]]/(t^{p+1})$ is a module over $\mathbb{K}[[t]]/(t^{p+1})$. Note that the linear map $\alpha: \mathfrak{g} \rightarrow \mathfrak{g}$ extends to a $\mathbb{K}[[t]]/(t^{p+1})$-linear map (denoted by the same notation) on the space $\mathfrak{g}[[t]]/(t^{p+1})$.

\begin{defn}
An order $p$ deformation of the compatible Hom-Lie algebra $(\mathfrak{g}, [~,~]_1, [~,~]_2, \alpha)$ is a pair $(\mu_{1,t}, \mu_{2,t})$, where $\mu_{1,t} = \sum_{i=0}^p t^i \mu_{1,i}$ and $\mu_{2,t} = \sum_{i=0}^p t^i \mu_{2,i}$ with $\mu_{1,i}, \mu_{2,i} \in C^2_\mathrm{cHom}(\mathfrak{g}, \mathfrak{g})$, and $\mu_{1,0} = \mu_1$, $\mu_{2,0} = \mu_2$ which makes the quadruple $(\mathfrak{g}[[t]]/(t^{p+1}), \mu_{1,t}, \mu_{2,t}, \alpha)$ into a compatible Hom-Lie algebra over $\mathbb{K}[[t]]/(t^{p+1})$.
\end{defn}

Therefore, in an order $p$ deformation, we have
\begin{align*}
[\mu_{1,t}, \mu_{1,t} ]_\mathsf{NR} = 0, \quad [\mu_{2,t}, \mu_{2,t}]_\mathsf{NR} = 0 ~~~~~ \text{ and } ~~~~~ [\mu_{1,t}, \mu_{2,t}]_\mathsf{NR} = 0.
\end{align*}
These are equivalent to the following system of identities
\begin{align*}
{}^1 \delta_\mathrm{Hom}(\mu_{1,n}) =~& \frac{1}{2} \sum_{\substack{i+j = n \\ i, j \geq 1 }} [\mu_{1,i}, \mu_{1,j}]_\mathsf{NR}, \\
{}^2 \delta_\mathrm{Hom}(\mu_{2,n}) =~& \frac{1}{2} \sum_{\substack{i+j = n \\ i, j \geq 1 }} [\mu_{2,i}, \mu_{2,j}]_\mathsf{NR}, \\
{}^1 \delta_\mathrm{Hom} (\mu_{2,n}) + {}^2 \delta_\mathrm{Hom}(\mu_{1,n}) =~& \sum_{\substack{i+j = n \\ i, j \geq 1}} [\mu_{1,i}, \mu_{2,j} ]_\mathsf{NR},
\end{align*}
for $n=0,1, \ldots, p$.

\begin{defn}
An order $p$ deformation $(\mu_{1,t}, \mu_{2,t})$ of the compatible Hom-Lie algebra $(\mathfrak{g}, [~,~]_1, [~,~]_2, \alpha)$ is said to be extensible if there exists an element $(\mu_{1, p+1}, \mu_{2, p+1}) \in C^2_\mathrm{cHom}(\mathfrak{g}, \mathfrak{g})$ which makes the pair
\begin{align*}
\big( \overline{\mu_{1,t}} = \mu_{1,t} + t^{p+1} \mu_{1,p+1} , ~ \overline{\mu_{2,t}} = \mu_{2,t} + t^{p+1} \mu_{2,p+1}  \big)
\end{align*}
into a deformation of order $p+1$.
\end{defn}

Let $(\mu_{1,t}, \mu_{2,t})$ be an order $p$ deformation of the Hom-Lie algebra $(\mathfrak{g}, [~,~]_1, [~,~]_2, \alpha)$. Define an element $Ob_{ (\mu_{1,t}, \mu_{2,t}) } \in C^3_\mathrm{cHom} (\mathfrak{g}, \mathfrak{g})$ by
\begin{align*}
Ob_{ (\mu_{1,t}, \mu_{2,t}) } = \big( \frac{1}{2} \sum_{\substack{i+j = p+1 \\ i, j \geq 1 }} [\mu_{1,i}, \mu_{1,j}]_\mathsf{NR}, ~ \sum_{\substack{i+j = p+1 \\ i, j \geq 1}} [\mu_{1,i}, \mu_{2,j} ]_\mathsf{NR}, ~ \frac{1}{2} \sum_{\substack{i+j = p+1 \\ i, j \geq 1 }} [\mu_{2,i}, \mu_{2,j}]_\mathsf{NR}  \big). 
\end{align*}
Since $\frac{1}{2} \sum_{\substack{i+j = p+1 \\ i, j \geq 1 }} [\mu_{1,i}, \mu_{1,j}]_\mathsf{NR}$ is the obstruction cochain to extend the order $p$ deformation of the Hom-Lie algebra $(\mathfrak{g}, [~,~]_1, \alpha)$, we have ${}^1 \delta_\mathrm{Hom} (\frac{1}{2} \sum_{\substack{i+j = p+1 \\ i, j \geq 1 }} [\mu_{1,i}, \mu_{1,j}]_\mathsf{NR}) = 0$. Similarly, ${}^2 \delta_\mathrm{Hom} (\frac{1}{2} \sum_{\substack{i+j = p+1 \\ i, j \geq 1 }} [\mu_{2,i}, \mu_{2,j}]_\mathsf{NR}) = 0$. Moreover, we have
\begin{align*}
&{}^1 \delta_\mathrm{Hom} \big(  \sum_{\substack{i+j = p+1 \\ i, j \geq 1}} [\mu_{1,i}, \mu_{2,j} ]_\mathsf{NR}  \big) ~+~ {}^2 \delta_\mathrm{Hom}  \big(   \frac{1}{2} \sum_{\substack{i+j = p+1 \\ i, j \geq 1 }} [\mu_{1,i}, \mu_{1,j}]_\mathsf{NR} \big) \\
&= \sum_{\substack{i+j = p+1 \\ i, j \geq 1}} [\mu_1, [\mu_{1,i}, \mu_{2,j} ]_\mathsf{NR} ]_\mathsf{NR} ~+~ \frac{1}{2} \sum_{\substack{i+j = p+1 \\ i, j \geq 1 }} [\mu_2, [\mu_{1,i}, \mu_{1,j}]_\mathsf{NR}  ]_\mathsf{NR} \\
&= \sum_{\substack{i+j = p+1 \\ i, j \geq 1}} \big(  [ [\mu_1, \mu_{1,i}]_\mathsf{NR}, \mu_{2,j}]_\mathsf{NR} - [\mu_{1,i}, [\mu_1, \mu_{2,j}]_\mathsf{NR} ]_\mathsf{NR} \big) \\
& \quad + \frac{1}{2} \sum_{\substack{i+j = p+1 \\ i, j \geq 1}} \big(    [[\mu_2, \mu_{1,i}]_\mathsf{NR}, \mu_{1,j}]_\mathsf{NR} - [\mu_{1,i},  [\mu_2, \mu_{1,j}]_\mathsf{NR}]_\mathsf{NR} \big) \\
&= \sum_{\substack{i+j = p+1 \\ i, j \geq 1}}  [ [\mu_1, \mu_{1,i}]_\mathsf{NR}, \mu_{2,j}]_\mathsf{NR} - \sum_{\substack{ i+j = p+1 \\ i,j \geq 1}} [\mu_{1,i}, [\mu_1, \mu_{2,j}]_\mathsf{NR} + [\mu_2, \mu_{1,j}]_\mathsf{NR} ]_\mathsf{NR} \\
&= - \sum_{\substack{ i_1 + i_2 + j = p+1 \\ i_1, i_2, j \geq 1}} \frac{1}{2} [[\mu_{1, i_1}, \mu_{1, i_2}]_\mathsf{NR}, \mu_{2,j}]_\mathsf{NR} + \sum_{\substack{i+ j_1 + j_2 = p+1 \\ i, j_1, j_2 \geq 1}} [\mu_{1,i}, [\mu_{1, j_1}, \mu_{2,j_2}]_\mathsf{NR} ]_\mathsf{NR} \\
&= - \sum_{\substack{ i  + j +k = p+1 \\ i, j, k \geq 1}} \frac{1}{2} [[\mu_{1, i}, \mu_{1, j}]_\mathsf{NR}, \mu_{2,k}]_\mathsf{NR} + \sum_{\substack{ i  + j +k = p+1 \\ i, j, k \geq 1}} \frac{1}{2} [[\mu_{1, i}, \mu_{1, j}]_\mathsf{NR}, \mu_{2,k}]_\mathsf{NR} = 0.
\end{align*}
Similarly, we can show that
\begin{align*}
{}^1 \delta_\mathrm{Hom} \big( \frac{1}{2} \sum_{\substack{i+j = p+1 \\ i, j \geq 1 }} [\mu_{2,i}, \mu_{2,j}]_\mathsf{NR}  \big) ~+~ {}^2 \delta_\mathrm{Hom} \big( \sum_{\substack{i+j = p+1 \\ i, j \geq 1}} [\mu_{1,i}, \mu_{2,j} ]_\mathsf{NR}  \big) = 0.
\end{align*}
Therefore, we have $\delta_\mathrm{cHom} ( Ob_{ (\mu_{1,t}, \mu_{2,t}) } ) = 0$. The corresponding cohomology class $[Ob_{ (\mu_{1,t}, \mu_{2,t}) } ] \in H^3_\mathrm{cHom}(\mathfrak{g}, \mathfrak{g})$ is called the obstruction class to extend the deformation $(\mu_{1,t}, \mu_{2,t}).$

\begin{thm}
An order $p$ deformation $(\mu_{1,p}, \mu_{2,p})$ of the compatible Hom-Lie algebra $(\mathfrak{g}, [~,~]_1, [~,~]_2, \alpha)$ is extensible if and only if the corresponding obstruction class $[Ob_{ (\mu_{1,t}, \mu_{2,t}) } ] \in H^3_\mathrm{cHom}(\mathfrak{g}, \mathfrak{g})$ is trivial.
\end{thm}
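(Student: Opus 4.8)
The plan is to prove both directions of the equivalence by analyzing what it means to extend the order $p$ deformation $(\mu_{1,t}, \mu_{2,t})$ to order $p+1$. Suppose we seek $(\mu_{1,p+1}, \mu_{2,p+1}) \in C^2_\mathrm{cHom}(\mathfrak{g}, \mathfrak{g})$ making $(\overline{\mu_{1,t}}, \overline{\mu_{2,t}})$ into an order $p+1$ deformation. Writing out the three Maurer-Cartan type equations $[\overline{\mu_{1,t}}, \overline{\mu_{1,t}}]_\mathsf{NR} = 0$, $[\overline{\mu_{2,t}}, \overline{\mu_{2,t}}]_\mathsf{NR} = 0$, $[\overline{\mu_{1,t}}, \overline{\mu_{2,t}}]_\mathsf{NR} = 0$ modulo $t^{p+2}$ and using that $(\mu_{1,t}, \mu_{2,t})$ already satisfies the order $p$ system, the only new constraint sits in the $t^{p+1}$ coefficient. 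First I would record that this coefficient-comparison gives exactly the three identities
\begin{align*}
{}^1\delta_\mathrm{Hom}(\mu_{1,p+1}) &= \tfrac{1}{2} \sum_{\substack{i+j=p+1 \\ i,j \geq 1}} [\mu_{1,i}, \mu_{1,j}]_\mathsf{NR}, \\
{}^2\delta_\mathrm{Hom}(\mu_{2,p+1}) &= \tfrac{1}{2} \sum_{\substack{i+j=p+1 \\ i,j \geq 1}} [\mu_{2,i}, \mu_{2,j}]_\mathsf{NR}, \\
{}^1\delta_\mathrm{Hom}(\mu_{2,p+1}) + {}^2\delta_\mathrm{Hom}(\mu_{1,p+1}) &= \sum_{\substack{i+j=p+1 \\ i,j \geq 1}} [\mu_{1,i}, \mu_{2,j}]_\mathsf{NR},
\end{align*}
where I am using $[\mu_1, -] = {}^1\delta_\mathrm{Hom}$ and $[\mu_2, -] = {}^2\delta_\mathrm{Hom}$ up to sign on $C^2$ (from the formula $\delta_\mathrm{Hom} f = (-1)^{n-1}[\mu, f]_\mathsf{NR}$ recalled in Section~\ref{sec-2}), and that the cross-terms involving $\mu_{1,0} = \mu_1$ or $\mu_{2,0} = \mu_2$ are precisely the coboundary terms on the left.

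Next I would observe that the triple on the right-hand sides is, by construction, exactly $Ob_{(\mu_{1,t}, \mu_{2,t})}$, and that the above three equations can be packaged as a single equation in $C^3_\mathrm{cHom}(\mathfrak{g}, \mathfrak{g})$, namely
\begin{align*}
\delta_\mathrm{cHom}(\mu_{1,p+1}, \mu_{2,p+1}) = Ob_{(\mu_{1,t}, \mu_{2,t})},
\end{align*}
using the explicit definition of $\delta_\mathrm{cHom}$ on a $2$-cochain: $\delta_\mathrm{cHom}(f_1, f_2) = ({}^1\delta_\mathrm{Hom} f_1, \, {}^1\delta_\mathrm{Hom} f_2 + {}^2\delta_\mathrm{Hom} f_1, \, {}^2\delta_\mathrm{Hom} f_2)$. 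Here I should double-check the middle slot and the placement of the factor $\tfrac12$ against the ordering convention in the definition of $Ob$ (which lists the $(1,1)$, then $(1,2)$, then $(2,2)$ components); a sign or symmetrization bookkeeping check is needed but nothing deeper. With this identification in hand, the statement becomes transparent: the deformation is extensible if and only if $Ob_{(\mu_{1,t}, \mu_{2,t})}$ lies in the image of $\delta_\mathrm{cHom}$, i.e.\ is a coboundary.

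Finally I would close the loop. We have already verified (in the computation immediately preceding the theorem) that $\delta_\mathrm{cHom}(Ob_{(\mu_{1,t}, \mu_{2,t})}) = 0$, so $Ob_{(\mu_{1,t}, \mu_{2,t})}$ is a cocycle and its class $[Ob_{(\mu_{1,t}, \mu_{2,t})}] \in H^3_\mathrm{cHom}(\mathfrak{g}, \mathfrak{g})$ is well defined. For the forward direction: if the deformation is extensible, then some $(\mu_{1,p+1}, \mu_{2,p+1})$ satisfies $\delta_\mathrm{cHom}(\mu_{1,p+1}, \mu_{2,p+1}) = Ob_{(\mu_{1,t}, \mu_{2,t})}$, so the obstruction class is zero. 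For the converse: if $[Ob_{(\mu_{1,t}, \mu_{2,t})}] = 0$, then $Ob_{(\mu_{1,t}, \mu_{2,t})} = \delta_\mathrm{cHom}(\mu_{1,p+1}, \mu_{2,p+1})$ for some $2$-cochain $(\mu_{1,p+1}, \mu_{2,p+1}) \in C^2_\mathrm{cHom}(\mathfrak{g}, \mathfrak{g})$, and reading the three components of this equation back shows that $(\overline{\mu_{1,t}}, \overline{\mu_{2,t}})$ satisfies the order $p+1$ system, hence is an order $p+1$ deformation. I expect the main obstacle to be purely organizational rather than conceptual: carefully matching the combinatorial sums (the index conditions $i+j=p+1$, $i,j\geq 1$, and the factors of $\tfrac12$) and the signs coming from $\delta_\mathrm{Hom} = (-1)^{n-1}[\mu,-]_\mathsf{NR}$ against the definitions of $Ob$ and $\delta_\mathrm{cHom}$, so that the single equation $\delta_\mathrm{cHom}(\mu_{1,p+1}, \mu_{2,p+1}) = Ob_{(\mu_{1,t}, \mu_{2,t})}$ is exactly equivalent to extensibility; once this dictionary is pinned down the rest is immediate.
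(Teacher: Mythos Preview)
Your proposal is correct and follows essentially the same approach as the paper: both arguments reduce extensibility to the single equation $\delta_\mathrm{cHom}(\mu_{1,p+1}, \mu_{2,p+1}) = Ob_{(\mu_{1,t}, \mu_{2,t})}$ and then read off the two directions immediately. Your write-up is in fact more explicit than the paper's, which simply asserts that the expression of $Ob$ yields this equation without spelling out the $t^{p+1}$ coefficient comparison or the packaging via the formula for $\delta_\mathrm{cHom}$ on $2$-cochains.
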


\begin{proof}
Suppose $(\mu_{1,t}, \mu_{2,t})$ is extensible. Let $(\mu_{1, p+1}, \mu_{2,p+1}) \in C^2_\mathrm{cHom}(\mathfrak{g}, \mathfrak{g})$ be an element which makes the pair $\big( \overline{\mu_{1,t}} = \mu_{1,t} + t^{p+1} \mu_{1,p+1} , ~ \overline{\mu_{2,t}} = \mu_{2,t} + t^{p+1} \mu_{2,p+1}  \big)$ into a deformation of order $p+1$. Then it follows from the expression of $Ob_{ (\mu_{1,t}, \mu_{2,t}) } $ that
\begin{align*}
Ob_{ (\mu_{1,t}, \mu_{2,t}) }  = \delta_\mathrm{cHom} \big( (\mu_{1, p+1}, \mu_{2,p+1}) \big).
\end{align*}
This shows that the cohomology class $[Ob_{ (\mu_{1,t}, \mu_{2,t}) } ]$ is trivial.

Conversely, suppose $(\mu_{1,t}, \mu_{2,t})$ is an order $p$ deformation for which $[Ob_{ (\mu_{1,t}, \mu_{2,t}) } ] \in H^3_\mathrm{cHom}(\mathfrak{g}, \mathfrak{g})$ is trivial. Then we have $ Ob_{ (\mu_{1,t}, \mu_{2,t}) }  = \delta_\mathrm{cHom} \big( (\mu_{1, p+1}, \mu_{2, p+1}) \big) $, for some $(\mu_{1, p+1}, \mu_{2, p+1}) \in C^2_\mathrm{cHom} (\mathfrak{g}, \mathfrak{g})$. Then it is easy to see that   $\big( \overline{\mu_{1,t}} = \mu_{1,t} + t^{p+1} \mu_{1,p+1} , ~ \overline{\mu_{2,t}} = \mu_{2,t} + t^{p+1} \mu_{2,p+1}  \big)$ is a deformation of order $p+1$. In other words, $(\mu_{1,t}, \mu_{2,t})$ is extensible.
\end{proof}

\medskip
\noindent {\bf Acknowledgements.} The author would like to thank Indian Institute of Technology (IIT) Kharagpur for providing the beautiful academic atmosphere where the research has been carried out.
\medskip


%

\end{document}